% ----------------------------------------------------------------
% AMS-LaTeX Paper ************************************************
% ----------------------------------------------------------------

\documentclass[11pt,b5paper,twoside, headrule]{amsart}

\usepackage{amsfonts, amsmath, amssymb,latexsym}
\usepackage{epsfig}
\usepackage[curve]{xy}
\usepackage{algorithmic}
\usepackage{algorithm}
\usepackage{enumerate}
\usepackage{framed}
\usepackage{hyperref}
\usepackage{mathtools}

\usepackage{chngcntr}
%\counterwithout{footnote}{chapter}

%\usepackage[active]{srcltx} % (remove this line if it gives error)

%%---------------------------- Page Format -------------------------------
\headsep=1truecm \headheight=0pt \topmargin=0pt \oddsidemargin=40pt
\evensidemargin=25pt \textwidth=13.5truecm \textheight=19.5truecm

%\nofiles
\footskip=10mm\parskip 0.2cm\addtocounter{page}{0}
\setlength{\arraycolsep}{1pt}
% THEOREMS -------------------------------------------------------

\newtheorem{thm}{Theorem}[section]
\newtheorem{cor}[thm]{Corollary}
\newtheorem{lem}[thm]{Lemma}
\newtheorem{prop}[thm]{Proposition}

\theoremstyle{definition}
\newtheorem{defn}[thm]{Definition}

\newtheorem{example}[thm]{Example}

\newtheorem{prob}[thm]{Problem}

\theoremstyle{remark}
\newtheorem{rem}[thm]{Remark}
\numberwithin{equation}{section}

%.....Alsina... newcommands

\title[On the RLWE/PLWE equivalence for cyclotomic number fields]{On the RLWE/PLWE equivalence for cyclotomic number fields}

\author{\sc Iv\'an Blanco-Chac\'on}
\address{Department of Mathematics, School of Science\\
Universidad de Alcal\'a de Henares\\
Ctra. Madrid-Barcelona Km. 33,600\\
Alcal\'a de Henares, Spain}
\email{ivan.blancoc@uah.es}
\thanks{Partially supported by MTM2016-79400-P}

\keywords{Ring Learning With Errors, Polynomial Learning With Errors, Cyclotomic polynomials, Condition number.}

\begin{document}
%=========================== Baselineskip ===============================
\renewcommand\baselinestretch{1.2}
\renewcommand{\arraystretch}{1}
\def\base{\baselineskip}
%========================= Begin Document--Fonts--and other ============================
\font\tenhtxt=eufm10 scaled \magstep0 \font\tenBbb=msbm10 scaled
\magstep0 \font\tenrm=cmr10 scaled \magstep0 \font\tenbf=cmb10
scaled \magstep0

%%================================head--document=======================

\def\evenhead{{\protect\centerline{\textsl{\large{I. Blanco}}}\hfill}}

\def\oddhead{{\protect\centerline{\textsl{\large{On the non vanishing of the cyclotomic $p$-adic $L$-functions}}}\hfill}}

\pagestyle{myheadings} \markboth{\evenhead}{\oddhead}

\thispagestyle{empty}

\maketitle

\begin{abstract}We study the equivalence between the Ring Learning With Errors and Polynomial Learning With Errors problems for cyclotomic number fields, namely: we prove that both problems are equivalent via a polynomial noise increase as long as the number of distinct primes dividing the conductor is kept constant. We refine our bound in the case where the conductor is divisible by at most three primes and we give an asymptotic subexponential formula for the condition number of the attached Vandermonde matrix valid for arbitrary degree.
\end{abstract}

\bigskip
\section{Introduction}
The Ring Learning With Errors problem in its several fashions (primal, dual and polynomial, decisional or search) constitutes the basis of some of the most promising and versatile public key cryptosystems, signature and key exchange protocols for the postquantum era. This is easily seen in the list of the surviving contenders (at the time of writing) in the last NIST public contest, which started in November 2017: in January 2019, the results of its second round were made public and taking into account the attacks and feedback to the surviving proposals of the first round, 26 proposals  passed this new sieve. The numbers of remaining proposals  within each category are listed in Table \ref{tabla:sencilla}.

\begin{table}[htbp]
\begin{center}
\begin{tabular}{|l|l|}
\hline
Category & Number of proposals\\
\hline \hline
Code-based (Hamming) &  5 \\ \hline
Code-based (rank metric) &  2 \\ \hline
Lattice-based (LWE)  &  1 \\ \hline
Lattice-based (RLWE)  &  6 \\ \hline
Lattice-based (PLWE)  &  1 \\ \hline
Lattice-based (Other)  &  4 \\ \hline
Multivariate-based & 4 \\ \hline
Hash-based &  1 \\ \hline
Supersingular isogeny-based &  1 \\ \hline
Other &  1 \\ \hline
\end{tabular}
\caption{NIST proposals. Second Round.}
\label{tabla:sencilla}
\end{center}
\end{table}

As we can see, the lattice-based category, and within it, the RLWE/PLWE subcategory remains the strongest contender in terms of number of surviving proposals.\footnote{At  https://www.safecrypto.eu/pqclounge/ a summary of candidates and the history of all submissions, attacks and withdrawals is available to filter and check.} These numbers justify the enormous interest in the investigation of the various open ends in RLWE. One of these open ends is the relation between two of its versions: the RLWE problem, which is formulated in terms of rings of integers of algebraic number fields and the PLWE version, in terms of rings of polynomials. Even when the number field is monogenic, namely, the rings defining the two problems are isomorphic, the error distributions can be very different in both scenarios. Although equivalences between dual/primal RLWE, dual/primal PLWE, decisional/search RLWE and decisional/search PLWE have been established, the equivalence between RLWE and PLWE has only been shown for a restricted although infinite class of number fields (see \cite{RSW} and \cite{DD}). Even in the cyclotomic case, which backs the seminal papers \cite{stehle2} and \cite{LPR}, very little is known -or formally proved and published- apart from the power-of-two case, used in \cite{stehle2}, and for which the distortion between the canonical and the coordinate embedding is a scaled isometry, as we recall in Section 3.

This problem, even for the cyclotomic cases seems a difficult one. The ideas in \cite{DD} can be applied to show the equivalence for cyclotomic number fields of degree $2^kp$ or $2^kpq$ with $p,q$ primes and $q<p$. But apart from these cases and the ad-hoc family constructed in \cite{RSW} nothing else is known on PLWE/RLWE equivalence at the time of writing. And there is a good reason to be interested in such an equivalence: in \cite{bernstein}, it is shown how the arithmetic of several polynomials rings leads to very efficient cryptographic designs, making the polynomial rings more amenable for computer implementations than ideals in rings of integers of number fields.

Our contribution is as follows: first, we prove that RLWE and PLWE are equivalent via a polynomial noise increase as long as the number of distinct primes dividing the conductor is constant. Moreover, by using results due to Bang (\cite{bang}) and Bloom (\cite{bloom}), we can give a sharper upper bound in the cases where the conductor is the product of at most four primes and show that the general power case can be reduced to the square-free case in a surprising way: up to a square factor in the degree, only the radical contributes to the noise increase. Secondly, we give a proof of a general asymptotic subexponential equivalence. Our methods are different from those in \cite{DD}, since we purely use several known algebraic properties of the cyclotomic polynomials, some arithmetic estimates for the divisor function and a careful analysis of the coefficients of the cyclotomic polynomials in terms of the roots via Vieta's formulas, an idea which was used in \cite{RSW}, but the precise shape of the polynomials they start with makes the use of Vieta's formulas more or less direct, something which does not happen in the cyclotomic case.

The organisation of our work is as follows: in Section 2 we give a brief summary of the key concepts of algebraic number theory necessary to introduce the RLWE/PLWE, what we do afterwards. We focus on the study of cyclotomic number fields for obvious reasons. We also recall what is understood by \emph{equivalence}, and how it relates to the condition number.  In Section 3 we start by recalling the equivalence in the power of two cyclotomic case (proof included for the convenience of the reader) and for the family studied in \cite{RSW}. After that, in Theorem 3.10 we give a general asymptotic bound for the condition number in terms of the maximum of the coefficients of the cyclotomic polynomial in absolute value and the degree of the cyclotomic field. This bound is polynomial in the degree when the number of prime factors is constant (Corollary 3.11). Section 4 refines the former results when the number of different prime factors of the degree is at most $3$ and Section $5$ closes our study by proving that in general, the growth of the condition number is at most subexponential (Theorem 5.4) in the degree, thanks to an asymptotic upper bound for the maximal coefficient, due to Bateman, and some asymptotic estimates for the prime divisor function $\omega$.

Finally, the author must express his gratitude to the anonymous referee for the careful review of our preliminary version. His corrections and suggestions have significantly strengthened our main results and helped re-organise the manuscript into a version which is more amenable and comfortable to read

\section{Prolegomena}

We start with the definition of lattice that we will use in our work:

\begin{defn}A lattice in $\mathbb{R}^n$ is a subgroup $\Lambda$ of the additive group $(\mathbb{R}^n,+)$ which is isomorphic to $\mathbb{Z}^n$ and such that $\Lambda\otimes_{\mathbb{Z}}\mathbb{R}=\mathbb{R}^n$.
\end{defn}
This definition has implicit the feature of being of full rank and hence, all lattices will be full rank for us. There are more general definitions but this will be enough for us. 
\begin{prob}[The approximate Shortest Vector Problem] For $\gamma>0$, the $\gamma$-approximate shortest vector problem ($\gamma$-SVP) is, on input of a full rank arbitrary lattice $\Lambda$ together with a $\mathbb{Z}$-basis, to determine a non-zero vector $x\in\Lambda$ with length smaller than $\gamma\lambda_1(\Lambda)$.
\end{prob}

It is proved in \cite{boas} that $\gamma$-SVP is NP-hard for small enough $\gamma$. In \cite{regev}, Regev introduced the Learning With Errors Problem (LWE) and established a quantum polynomial time reduction from $\gamma$-SVP to LWE but for much larger approximation factors than in \cite{boas}. Moreover Regev's cryptosystem presents a quadratic overhead in the size of the public key which renders it unfeasible in scenarios where high speed computations are required over a very large plaintext set, such as election systems (e-voting/i-voting). To tackle this unfeasibility, Stehl\'e et al (\cite{stehle2}) and Lyubashevsky et al (\cite{LPR}) introduced the PLWE and RLWE problems, whose security is backed on a restricted version of SVP, as we will recall in Section 2.2.

\subsection{Relevant facts of algebraic number theory}

Readers who are familiar with this material can safely skip it as all our notations are standard. Only definitions and facts which are essential for our proofs are recalled here; in particular we have omitted the definitions of trace, norms and discriminants. Readers who are not so familiar with this material  are referred to \cite{stewart}, Chapter 2 or to any standard first year textbook in algebraic number theory. 

\subsubsection{Algebraic number fields} An algebraic number field is a field extension $K = \mathbb{Q}(\theta)/\mathbb{Q}$ of some finite degree $n$, where $\theta$ satisfies a relation $f(\theta) = 0$ for some irreducible polynomial $f(x) \in\mathbb{Q}[x]$, which is monic without loss of generality. The polynomial $f$ is called the minimal polynomial of $\theta$, and $n$ is also the degree of $f$. Notice that $K$ is in particular an $n$-dimensional $\mathbb{Q}$-vector space and the set $\{1,\theta,...,\theta^{n-1}\}$ is a $\mathbb{Q}$-basis of $K$ called a power basis. Notice that associating $\theta$ with the indeterminate $x$ yields a natural isomorphism between $K$ and $\mathbb{Q}[x]/f(x)$.

A number field $K = Q(\theta)$ of degree $n$ has exactly $n$ field embeddings (field monomorphisms) fixing the base field $\mathbb{Q}$, which we denote $\sigma_i: K \to \overline{\mathbb{Q}}$, where $\overline{\mathbb{Q}}$ stands for an algebraic closure of $\mathbb{Q}$, fixed from now on. These embeddings map $\theta$ to each of the roots of its minimal polynomial $f$. The number field is said to be Galois if $K$ is the splitting field of $f$, or equivalently if the images of all the embeddings coincide.

An embedding whose image lies in $\mathbb{R}$ is called a real embedding; otherwise it is called a complex embedding. Since non-real roots of $f$ come in conjugate pairs, so do the complex embeddings. The number of real embeddings is denoted $s_1$ and the number of pairs of complex embeddings is denoted $s_2$, so we have $n=s_1+2s_2$. If $s_2=0$ ($s_1=0$) then $K$ is said to be totally real (totally imaginary).

The canonical embedding $\sigma: K\to \mathbb{R}^{s_1}\times\mathbb{C}^{2s_2}$ is defined as 
$$
\sigma(x) = (\sigma_1(x),...,\sigma_n(x)).
$$ 
\begin{defn}
An algebraic integer is an element of $\overline{\mathbb{Q}}$ whose minimal polynomial over $\mathbb{Q}$ has integer coefficients. 
\end{defn}

Let $\mathcal{O}_K\subset K$ denote the set of all algebraic integers in $K$. This set forms a ring under addition and multiplication in $K$ (\cite{stewart}, Theorem 2.9), called the ring of integers of $K$. It happens that $\mathcal{O}_K$ is a free $\mathbb{Z}$-module of rank $n$, i.e., it is the set of all $\mathbb{Z}$-linear combinations of some (non-unique) basis $\mathcal{B}=\{b_1,...,b_n\}\subset \mathcal{O}_K$ of $K$ (\cite{stewart}, Theorem 2.16). Such a set $\mathcal{B}$ is called an integral basis.

\subsubsection{Ideal lattices} By a discrete ring we mean a ring which is free of finite rank as abelian group. An ideal lattice in $\mathbb{R}^n$ is a lattice of the form $\sigma(I)$ where $I$ is an ideal in a discrete ring $R$ (of rank $n$) and $\sigma: R\cong \mathbb{Z}^n$ is a group isomorphism. By construction, apart from the additive structure, an ideal lattice $\sigma(I)$ has also a product, inherited from the product in $R$. 

When $R=\mathcal{O}_K$ for a number field $K$, the canonical embedding $\sigma$ provides in a natural way an ideal lattice for each ideal $I$ of $R$. For the canonical embedding, multiplication and addition are preserved component-wise. This is not true for the coordinate embedding: for instance, for the ring $\mathbb{Z}[x]/(x^m+1)$, for $m=2^l$, multiplying by $x$  is equivalent to shifting the coordinates and negate the independent term. This is one of several advantages of using the canonical embedding.

\subsubsection{Cyclotomic fields.} Let $n>1$ be an integer. The set of primitive $n$-th roots of unity (those of the form $\theta_k=exp(2\pi i)k/n$, with $1\leq k \leq n$ coprime to $n$) forms a multiplicative group of order $m=\phi(n)$. The $n$-th cyclotomic polynomial is
$$
\Phi_n(x)=\prod_{k\in\mathbb{Z}^*_n}(x-\theta_k).
$$
This is the minimal polynomial of $\theta_k$ for each $k$, so that $K=\mathbb{Q}(\theta_k)$ is an algebraic number field of degree $m$. It can be proved (\cite{stewart} Chap 3) that $\mathcal{O}_K=\mathbb{Z}[\theta]$ with $\theta=\theta_k$ for each $k$.

\begin{prop}Let $p$ be any prime. Then:
\begin{itemize}
\item[a)] If $p=2$, then for any $k\geq 2$, it holds that
$$
\Phi_{2^k}(x)=x^{2^{k-1}}+1.
$$
\item[b)] If $p>2$, then for any $k\geq 1$, it holds that
$$
\Phi_{p^k}(x)=\sum_{i=0}^{p-1}x^{ip^{k-1}}.
$$
\end{itemize}
\label{polycyclo}
\end{prop}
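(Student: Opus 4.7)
The plan is to deduce both statements uniformly from the fundamental factorisation
$$x^n-1=\prod_{d\mid n}\Phi_d(x),$$
which is a standard consequence of partitioning the $n$-th roots of unity in $\mathbb{C}$ according to their order: every root of $x^n-1$ is a primitive $d$-th root of unity for a unique $d\mid n$, and conversely. Both sides are monic polynomials with the same roots, so the identity holds in $\mathbb{Z}[x]$. I would take this identity as the sole ingredient.

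Specialising to $n=p^k$, the divisors are exactly the powers $1,p,p^2,\ldots,p^k$. Writing the fundamental identity for $n=p^k$ and for $n=p^{k-1}$ and dividing telescopically yields
$$\Phi_{p^k}(x)=\frac{x^{p^k}-1}{x^{p^{k-1}}-1}.$$
Next, the substitution $y=x^{p^{k-1}}$ converts the numerator into $y^p-1$ and the denominator into $y-1$, so the quotient is the geometric sum
$$\frac{y^p-1}{y-1}=1+y+y^2+\cdots+y^{p-1}.$$
Substituting back gives
$$\Phi_{p^k}(x)=\sum_{i=0}^{p-1}x^{ip^{k-1}},$$
which is exactly statement (b) when $p$ is odd. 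When $p=2$ only the two terms $i=0$ and $i=1$ survive, producing $1+x^{2^{k-1}}=x^{2^{k-1}}+1$, which is statement (a).

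There is no genuine obstacle here: the only conceptual step is invoking the fundamental factorisation, and the only computational step is the elementary geometric-sum identity. If one wished to avoid invoking the factorisation directly, an equally elementary alternative would be induction on $k$ using the well-known relation $\Phi_{pm}(x)=\Phi_m(x^p)$ for $p\mid m$, starting from the base case $\Phi_p(x)=1+x+\cdots+x^{p-1}$, which itself follows from the factorisation $x^p-1=(x-1)\Phi_p(x)$.
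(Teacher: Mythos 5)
Your proof is correct, but it follows a genuinely different route from the paper's. The paper argues ``bottom up'': it writes down the candidate polynomial, establishes its irreducibility over $\mathbb{Q}$ via Eisenstein's criterion after the substitution $x=y+1$, and then observes that it vanishes at the primitive $p^k$-th roots of unity, so by degree and monicity comparison it must coincide with $\Phi_{p^k}$ as defined (the product over primitive roots); for $p>2$ the paper handles $k=1$ similarly and reduces higher $k$ to this case via the geometric series. You instead derive the formula ``top down'' from the fundamental factorisation $x^n-1=\prod_{d\mid n}\Phi_d(x)$, telescoping over the chain of divisors $1,p,\dots,p^k$ to obtain $\Phi_{p^k}(x)=(x^{p^k}-1)/(x^{p^{k-1}}-1)$ and then applying the geometric-sum identity; irreducibility plays no role. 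Your derivation is cleaner and produces both parts (a) and (b) uniformly in one computation, whereas the paper's route has the side benefit of proving irreducibility of $\Phi_{p^k}$ directly (useful since the paper defines $\Phi_n$ merely as a product over primitive roots, without building irreducibility into the definition). Both are standard and complete.
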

\proof
For $p=2$, the polynomial $x^{2^{k-1}}+1$ is irreducible via Eisenstein criterion after change of variable $x=y+1$. Likewise, this polynomial vanishes at each $2^k$-th primitive root of unity. A similar argument holds for $p>2$ and $k=1$, and using properties of the geometric series, we can reduce to this argument for each $k\geq 1$. For details, check \cite{washington} Ch. II.\qedhere

The following result will also be useful later on:

\begin{prop}[\cite{washington} Ch. II] Let $m=pr$ with $p$ prime and $r$ not divisible by $p$. Then 
$$
\Phi_m(x)=\Phi_r(x^p)/\Phi_r(x).
$$
In addition, if we write $m=p_1^{r_1}\cdots p_l^{r_l}$ with $p_1,\cdots,p_l$ different primes and denote $rad(m):=p_1\cdots p_l$, then
$$
\Phi_m(x)=\Phi_{rad(m)}(x^{\frac{m}{rad(m)}}).
$$
\label{polycyclo2}
\end{prop}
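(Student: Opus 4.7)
The plan is to reduce both identities to a careful root-counting argument on roots of unity, combined with an induction on the quantity measuring how far $m$ is from being squarefree.

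I would begin by establishing a companion identity: \emph{if a prime $p$ divides $n$, then $\Phi_{pn}(x)=\Phi_n(x^p)$}. Let $\zeta$ be a root of $\Phi_n(x^p)$, so that $\zeta^p$ is a primitive $n$-th root of unity. If $d$ denotes the multiplicative order of $\zeta$, then the order of $\zeta^p$ equals $d/\gcd(d,p)$, and hence $n=d/\gcd(d,p)$. The hypothesis $p\mid n$ rules out $\gcd(d,p)=1$, forcing $\gcd(d,p)=p$ and $d=pn$. Thus every root of $\Phi_n(x^p)$ is a primitive $pn$-th root of unity; since $p\mid n$ implies $\phi(pn)=p\,\phi(n)=\deg\Phi_n(x^p)$, and both polynomials are monic with simple roots, they coincide.

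The first identity of the statement follows from the very same analysis in the complementary regime $p\nmid r$. For a root $\zeta$ of $\Phi_r(x^p)$ one again has $d/\gcd(d,p)=r$, but now neither value $\gcd(d,p)\in\{1,p\}$ is excluded, so $d\in\{r,pr\}$. Both cases actually occur: raising to the $p$-th power permutes the primitive $r$-th roots of unity (since $\gcd(p,r)=1$), yielding $d=r$; and for any primitive $pr$-th root $\zeta$ one computes directly that $\zeta^p$ has order $r$, yielding $d=pr$. The resulting root set has cardinality $\phi(r)+\phi(pr)=\phi(r)+(p-1)\phi(r)=p\,\phi(r)=\deg\Phi_r(x^p)$. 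Consequently $\Phi_r(x^p)=\Phi_r(x)\Phi_{pr}(x)$, and the first identity is obtained after rearrangement.

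For the second identity I would induct on $t:=\sum_{i=1}^{l}(r_i-1)$. When $t=0$ one has $m=rad(m)$ and both sides agree trivially. For the inductive step, pick any index $i$ with $r_i\geq 2$ and set $m':=m/p_i$; then $rad(m')=rad(m)$ and $m'/rad(m)=m/(p_i\cdot rad(m))$. The inductive hypothesis yields
$$
\Phi_{m'}(x)=\Phi_{rad(m)}\bigl(x^{m/(p_i\cdot rad(m))}\bigr),
$$
and since $p_i\mid m'$, the companion identity gives $\Phi_m(x)=\Phi_{m'}(x^{p_i})$. Substituting one into the other produces
$$
\Phi_m(x)=\Phi_{rad(m)}\bigl(x^{m/rad(m)}\bigr),
$$
as required. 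The only delicate point throughout is to isolate cleanly the two regimes $p\mid n$ versus $p\nmid n$ in the order analysis of $\zeta$; once that dichotomy is recognised, both identities reduce to elementary counting with Euler's totient.
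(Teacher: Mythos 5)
Your proof is correct. Note that the paper does not actually prove this proposition at all: it is quoted from Washington, Ch.~II, so any complete argument here is already more than the paper supplies. Your route is the standard elementary one, organised slightly differently: you first isolate the companion identity $\Phi_{pn}(x)=\Phi_n(x^p)$ for $p\mid n$ via the order dichotomy $\gcd(d,p)\in\{1,p\}$, then handle the coprime regime $p\nmid r$ by the same order analysis plus the totient count $\phi(r)+\phi(pr)=p\,\phi(r)$ to get $\Phi_r(x^p)=\Phi_r(x)\Phi_{pr}(x)$, and finally reduce the radical formula to the companion identity by induction on $\sum_i(r_i-1)$; the textbook treatment usually derives both statements from the factorisation $x^n-1=\prod_{d\mid n}\Phi_d(x)$ and M\"obius inversion, which is less hands-on but gets both regimes in one stroke. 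The only point you assert rather than justify is that $\Phi_n(x^p)$ (respectively $\Phi_r(x^p)$) has simple roots; this is immediate since $\Phi_n(y)\mid y^n-1$ gives $\Phi_n(x^p)\mid x^{pn}-1$, which is separable in characteristic zero, so the degree counts you perform do force equality of the monic polynomials. With that one-line remark added, the argument is complete and correctly yields both displayed identities, including the step $rad(m')=rad(m)$ in the induction, which is exactly where the hypothesis $r_i\geq 2$ is used.
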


\begin{defn}A number field $K$ such that, like in the cyclotomic case, $\mathcal{O}_K=\mathbb{Z}[\alpha]$ for some $\alpha\in K$ is said to be monogenic.
\label{defmono}
\end{defn}

\subsection{Ring/Polynomial Learning With Errors} Let $K=\mathbb{Q}(\alpha)$ be a number field of degree $n$ and let $\mathcal{O}_K$ be its ring of integers.

\subsubsection{Statement of the problems}

Assume that $K$ is the splitting field of a monic irreducible polynomial $f(x)\in\mathbb{Z}[x]$ with $f(\alpha)=0$ and consider the ring $\mathcal{O}=\mathbb{Z}[x]/(f(x))$. The ring $\mathbb{Z}[\alpha]\cong\mathcal{O}$ has finite index in $\mathcal{O}_K$, and the restriction of the canonical embedding to $\mathcal{O}$ provides a lattice. A very common choice is $f(x)=\Phi_{p^k}(x)$, and even more common is the choice $p=2$ (see \cite{stehle2}).

\begin{defn}[The RLWE/PLWE problem] Let $\chi$ be a discrete random variable with values in $\mathcal{O}_K/q\mathcal{O}_K$ (resp. in $\mathcal{O}/q\mathcal{O}$). The RLWE (resp. PLWE) problem for $\chi$ is defined as follows:

For an element $s\in \mathcal{O}_K/q\mathcal{O}_K$ (resp. $\mathcal{O}/q\mathcal{O}$) chosen uniformly at random, if an adversary is given access to arbitrarily many samples $\{(a_i,a_is+e_i)\}_{i\geq 1}$ of the RLWE (resp. PLWE) distribution, where for each $i\geq 1$, $a_i$ is uniformly chosen at random and $e_i$ is sampled from $\chi$, the adversary must recover $s$ with non-negligible advantage.\footnote{This is the  definition of RLWE/PLWE in \emph{search version}. As all this material is nowadays well known to the specialist we are sparing as many details as possible. We are taking this version as starting point, as it is more suitable for our argument. We refer the reader to \cite{LPR} for the \emph{decisional version} of the problem.}
\end{defn}

In \cite{stehle2}, a polynomial time reduction is given from worst case SVP over ideal lattices to the PLWE problem for power-of-two cyclotomic fields. Later, in \cite{LPR} ideal-SVP polynomial time reduction was established for RLWE over cyclotomic fields under flexible conditions on the security parameters and further, in \cite{PRS} the polynomial reduction was extended to non-cyclotomic Galois number fields building on the same number-theoretical kind of arguments as in \cite{LPR}.

\subsubsection{Equivalence between the formulations.}For a number field $K$, we say that $RLWE$ and $PLWE$ are equivalent for $K$ if every solution for the first can be turned in polynomial time into a solution for the second (and viceversa), incurring in a noise increase which is polynomial in the number field degree. In \cite{RSW}, the equivalence between RLWE and PLWE is proved for the following family of polynomials:

\begin{thm}[\cite{RSW}, pag. 4 and Theorem 4.7] There is a polynomial time reduction algorithm from RLWE over $K_{f_{n,p}}$ to PLWE for $f_{n,p}(x)$ where $K_{f_{n,p}}$ is the splitting field of $f_{n,p}(x)=x^n+xp(x)-r$ where $n\geq 1$, $p(x)$ runs over polynomials with $deg(p(x))<n/2$ and $r$ runs over primes such that $25||p||_1^2\leq r\leq s(n)$, with $s(x)$ a polynomial. Notice that there is a trivial reduction from PLWE to RLWE.\footnote{For $p(x)=\displaystyle\sum_{i=0}^np_ix^i\in\mathbb{R}[x]$, the $1$-norm is defined as $||p||_1=\displaystyle\sum_{i=0}^n|p_i|$}
\end{thm}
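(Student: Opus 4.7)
The plan is to reduce the statement to a polynomial upper bound on the condition number of the Vandermonde matrix $V=(\theta_i^{\,j-1})_{i,j=1}^n$ attached to the roots $\theta_1,\ldots,\theta_n$ of $f_{n,p}$, since $V$ is exactly the change-of-basis matrix between the coordinate and the canonical embeddings; a polynomial bound on $\kappa(V)=\|V\|\,\|V^{-1}\|$ is equivalent to the polynomial noise increase required in Section 2.2.

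First I would nail down the geometry of the roots. From the relation $\theta^n=r-\theta p(\theta)$ together with $\deg p<n/2$, a Rouch\'e-type comparison against $x^n-r$ using the hypothesis $25\|p\|_1^2\le r$ shows that all $\theta_i$ lie in a narrow annulus centred on the circle of radius $r^{1/n}$. Two consequences follow at once. Firstly, $f_{n,p}$ is irreducible: any nontrivial factorisation $f_{n,p}=gh$ in $\mathbb{Z}[x]$ would force $|g(0)|\,|h(0)|=r$ with $r$ prime, but each $|g(0)|$ is a product of root moduli all of size $\approx r^{1/n}$, so $\deg g=0$ or $\deg h=0$. Secondly, the trivial bound $\|V\|\le n\max_i|\theta_i|^{n-1}$ is already polynomial in $r\le s(n)$ and hence in $n$, which settles one half of the condition-number estimate.

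The harder direction, and the main obstacle, is upper-bounding $\|V^{-1}\|$. Here I would invoke the Lagrange/Vandermonde inverse formula, which expresses each entry of $V^{-1}$ as a coefficient of $f_{n,p}(x)/(x-\theta_i)$ divided by $f_{n,p}'(\theta_i)=\prod_{k\ne i}(\theta_i-\theta_k)$. The numerators are computable by synthetic division of $f_{n,p}$ by $x-\theta_i$; since $f_{n,p}$ is so sparse (its nonzero coefficients come only from the leading monomial, from $xp(x)$, and from $-r$), the synthetic-division output is a short combination of powers of $\theta_i$ with small coefficients, giving a polynomial upper bound. This is the ``Vieta'' idea advertised in the introduction, which is feasible precisely because $f_{n,p}$ is given explicitly. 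For the denominator I would write $f_{n,p}'(\theta_i)=n\theta_i^{n-1}+p(\theta_i)+\theta_i p'(\theta_i)$; the leading term has modulus $\asymp n\,r^{(n-1)/n}$, whereas the $p$-perturbation, using $\deg p<n/2$, is of the smaller order $\|p\|_1\,n\,r^{(n-1)/(2n)}$, and the gap $r\ge 25\|p\|_1^2$ makes the first term dominate, yielding a polynomial lower bound for $|f_{n,p}'(\theta_i)|$. Combining these estimates gives $\kappa(V)=\mathrm{poly}(n)$, which together with the trivial reverse reduction noted in the statement completes the proof.
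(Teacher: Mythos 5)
The paper does not actually prove this theorem: it is imported from \cite{RSW} (cited as ``pag.~4 and Theorem 4.7'') and is accompanied only by a two-sentence sketch of Rosca--Stehl\'e--Wallet's strategy, namely first establish the equivalence for the pure family $\phi_{n,a}(x)=x^n-a$ with $a$ square-free, and then argue via Rouch\'e's theorem that the roots of $f_{n,p}$ are small perturbations of those of $x^n-r$ because the perturbing term $xp(x)$ has degree below $n/2+1$. Your proposal follows the same broad spirit (Rouch\'e localisation of roots plus a Vandermonde condition-number estimate), but you reorganise it: you attack $f_{n,p}$ directly with synthetic division and Vieta-type bounds rather than first handling $x^n-a$ and then transporting the estimate through a root-perturbation argument. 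That reorganisation is plausible, and the pieces you describe (annulus about radius $r^{1/n}$, irreducibility from the primeness of $r$, lower bound on $|f_{n,p}'(\theta_i)|$ via dominance of the leading term under $r\ge 25\|p\|_1^2$) are all in the right ballpark.

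The substantive gap, however, is at the very first step. You assert that a polynomial bound on $\kappa(V_{f_{n,p}})$ is ``equivalent to'' the polynomial noise growth needed for the RLWE-to-PLWE reduction. The paper itself warns, in Section 2.2.3, that this holds only in the monogenic case, i.e.\ when $\mathcal{O}_K=\mathbb{Z}[\theta]$, and adds explicitly that ``the non-monogenic case needs an intermediate reduction that we do not discuss here as our results deal with cyclotomic number fields.'' The fields $K_{f_{n,p}}$ are not a priori monogenic (unlike the cyclotomic fields the rest of the paper treats), and the RSW theorem in fact incorporates an extra reduction layer across the inclusion $\mathbb{Z}[\theta]\subseteq\mathcal{O}_K$, involving the conductor of the order. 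Without either proving $\mathbb{Z}[\theta]=\mathcal{O}_K$ for this family or supplying that intermediate reduction, the condition-number estimate on $V_{f_{n,p}}$ alone does not yield the claimed polynomial-time RLWE-to-PLWE reduction.
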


The argument to prove this theorem is, first, to consider the family of polynomials $\phi_{n,a}(x):=x^n-a$, with $a\in\mathbb{Z}\setminus\{0\}$ square-free. Denoting by $K_{\phi_{n,a}}$ the splitting field of $\phi_{n,a}(x):=x^n-a$, the authors check in first place the equivalence for $K_{\phi_{n,a}}$ and they show, via a careful use of Rouch\'e theorem, that when $\phi_{n,a}(x)$ is perturbed by adding another polynomial with degree smaller than $n/2$ the roots of both polynomials are close enough.  

\subsubsection{Distortion between embeddings: the condition number.} For a monic irreducible polynomial $f(x)\in \mathbb{Z}[x]$ and $\theta$ a root of $f(x)$, consider the subring $\mathbb{Z}[x]/(f(x))\cong\mathbb{Z}[\theta]\subseteq\mathcal{O}_K$. As lattices, $\mathbb{Z}[x]/(f(x))$ is endowed with the coordinate embedding while $\mathbb{Z}[\theta]$ is endowed with the canonical embedding inherited from $\mathcal{O}_K$, and the evaluation-at-$\theta$ morphism causes a distortion between both. Explicitly, the transformation between the embeddings is given by
\begin{equation}
\begin{array}{ccc}
V_f: \mathbb{Z}[x]/(f(x)) & \to & \sigma_1(\mathcal{O}_{K})\times\cdots\times\sigma_n(\mathcal{O}_{K})\\
\displaystyle\sum_{i=0}^{n-1}a_i\overline{x}^i & \mapsto & \left(\begin{array}{cccc}1 & \theta_1 & \cdots & \theta_1^{n-1}\\ 
1 & \theta_2 & \cdots & \theta_2^{n-1}\\  
\vdots & \vdots & \ddots \vdots\\ 
1 & \theta_n & \cdots & \theta_n^{n-1}\end{array}\right)\left(\begin{array}{c}a_0 \\ a_1 \\  \vdots \\a_{n-1}\end{array}\right),
\end{array}
\label{latticebij}
\end{equation}
where $\overline{x}$ is the class of $x$ modulo $f(x)$ and $\theta=\theta_1,\theta_2,...,\theta_n$ are the Galois conjugates of $\theta$. Namely, the transformation $V_f$ is given by a Vandermonde matrix (which we denote also by $V_f$) acting on the coordinates. 

For any matrix $A=(a_{ij})\in M_{n\times n}(\mathbb{C})$, denote by $A^*$ its transposed conjugate and recall that its Frobenius norm is defined as 
\begin{equation}
||A||:=\sqrt{Tr(AA^*)}=\sqrt{\sum_{i,j=1}^n|a_{ij}|^2}.
\label{easy}
\end{equation}

The noise growth caused by $V_f$ will remain \emph{controlled} whenever $||V_f||$ and $||V_f^{-1}||$ remain so, and as justified in \cite{RSW}, a reasonable measure of how both quantities are controlled is given by $||V_f||||V_f^{-1}||$.
\begin{defn}The condition number of an invertible matrix $A\in\mathrm{M}_n(\mathbb{C})$ is defined as Cond$(A):=||A|||A^{-1}||$.
\end{defn}

Hence, in the monogenic case, the problem of the equivalence is reduced to show that $Cond(V_f)=O(n^r)$ for some $r$ independent of $n$. The non-monogenic case needs an intermediate reduction that we do not discuss here as our results deal with cyclotomic number fields. But even in the monogenic case the difficulty in our problem is that Vandermonde matrices are in general very ill conditioned. 

Indeed, for a sequence of real nodes $s=\{1, s_1,...,s_{n-1}\}$, the corresponding Vandermonde matrix is exponentially conditioned at least in the following cases (\cite{gautschi}):
\begin{itemize}
\item When all the nodes are positive. In this case $Cond(V_s)>2^{n-1}$.
\item When the nodes are symmetrically located with respect to the origin. In this case $Cond(V_s)>2^{n/2}$.
\item For some special but large families: harmonic nodes, rational families of nodes in $[0,1]$ and $[-1,1]$, roots of Chebyshev polynomials and other orthogonal families...
\end{itemize}

The situation in the complex case, however, is very different. For instance, for a sequence of complex nodes $s=\{1, s_1,...,s_{n-1}\}$ , the corresponding Vandermonde matrix tends to be badly conditioned unless the nodes are more or less equally spaced on or about the unit circle (\cite{pan}).

\begin{example}For $n>1$, let $s=\{1,\theta_n,\theta_n^2,...,\theta_n^{n-1}\}$ be the set of all the $n$-th roots of $1$, not just the primitive ones. Then $Cond(V_s)=n$: indeed, according to Eq. \ref{easy}, $||V_s||=n$. On the other hand, it is easy to check that
$$
V_sV_s^*=nId,
$$
hence $V_s^{-1}=n^{-1}V_s^*$, and since $||V_s^*||=||V_s||=n$, then $||V_s^{-1}||=1$.
\label{excyclotot}
\end{example}

So, intuitively, and according with this philosophy, the difficulty of bounding the condition number for cyclotomic number fields, is that even if Vandermonde matrices of full systems of $n$-th roots are linearly conditioned, Vandermonde matrices attached to cyclotomic polynomials contain only the primitive roots, which are not equally spaced on the unit circle. In particular, when the modulus is the product of a large number of prime factors, the geometric distribution of the roots in the unit circle can vary in a very chaotic manner.

\begin{example}Another piece of bad news is that polynomial condition numbers do not behave well under restriction to subextensions, in the following sense: 

As we will prove in the next section, there is a polynomial upper bound for the condition number of cyclotomic polynomials whose conductor has a fixed number of prime factors and whose maximal coefficient (in absolute value) is polynomially bounded.

However, for $n>2$, if $\theta_n$ denotes a primitive $n$-th root, the maximal totally real subextension of the cyclotomic field $K_n$, denoted by $K_n^{+}$, is also a monogenic number field of degree $\phi(n)/2$ (\cite{washington}). Indeed, $\mathcal{O}_{K_n^{+}}=\mathbb{Z}[\psi_n]$, with $\psi_n=\theta_n+\theta_n^{-1}=cos\left(\frac{2\pi}{n}\right)>0$ if $n>4$ . Hence, for the Vandermonde matrix attached to the sequence $s=\{1,\psi_n,\psi_n^2,...,\psi_n^{\phi(n)/2-1}\}$ corresponding to the transformation between the canonical and coordinate embeddings we have, according to \cite{gautschi} as recalled above:
$$
Cond(V_s)>2^{\phi(n)/2-1}.
$$
\end{example}

\section{Asymptotic bounds for the distortion}

From now on, let us denote by $\Phi_n$ the $n$-th cyclotomic polynomial, by $K_n$ the $n$-th cyclotomic field and as usual, $\mathcal{O}_{K_n}$ its ring of integers, isomorphic to $\mathbb{Z}[x]/(\Phi_n(x))$ via evaluation in a primitive root $\zeta$. Denote also $m:=\phi(n)$, the degree of $\Phi_n$. Recall from the previous section that the transformation between the canonical and coordinate embeddings is given by the Vandermonde matrix
\begin{equation}
V_{\Phi_n}=\left(\begin{array}{cccc}
1 & \zeta_1 & \cdots & \zeta_1^{m-1}\\ 
1 & \zeta_2 & \cdots & \zeta_2^{m-1}\\  
\vdots & \vdots & \ddots \vdots\\ 
1 & \zeta_m & \cdots & \zeta_m^{m-1}
\end{array}\right),
\label{latticebij2}
\end{equation}
whose condition number measures the distortion between the embeddings.

Since, thanks to Eq. \ref{easy}, $||V_{\Phi_n}||=m$, our task is to study the quantity $||V_{\Phi_n}^{-1}||$. But before entering into the computations, we give the proof of one of the very few cases where equivalence is known in a fully satisfory manner.

\begin{thm}For $n=2^k$, the map $V_{\Phi_n}$ is a scaled isommetry. In addition, $Cond(V_{\Phi_n})=m$.
\end{thm}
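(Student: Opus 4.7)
The plan is to compute $V_{\Phi_n} V_{\Phi_n}^{*}$ directly and show it equals $m\cdot\operatorname{Id}$, which is the precise statement that $V_{\Phi_n}$ is a scaled isometry with scaling factor $\sqrt{m}$. All norm computations then follow.

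First I would invoke Proposition \ref{polycyclo}(a): for $n=2^{k}$ one has $\Phi_{n}(x)=x^{m}+1$, so every primitive $n$-th root of unity $\zeta_j$ satisfies $\zeta_j^{m}=-1$. Writing the Vandermonde matrix $V=V_{\Phi_n}$ as $V_{jl}=\zeta_j^{\,l-1}$ for $1\le j,l\le m$, the key calculation is
$$
(VV^{*})_{jk}=\sum_{l=0}^{m-1}\zeta_j^{\,l}\,\overline{\zeta_k^{\,l}}=\sum_{l=0}^{m-1}\left(\frac{\zeta_j}{\zeta_k}\right)^{l},
$$
where I have used that $|\zeta_k|=1$ so $\overline{\zeta_k}=\zeta_k^{-1}$. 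The crucial point is that the ratio $w:=\zeta_j/\zeta_k$ satisfies $w^{m}=\zeta_j^{m}/\zeta_k^{m}=(-1)/(-1)=1$, so $w$ is an $m$-th root of unity. When $j=k$ this contributes $m$, and when $j\neq k$ we have $w\neq 1$, so the geometric sum $\sum_{l=0}^{m-1}w^{l}$ vanishes. Therefore $VV^{*}=m\cdot\operatorname{Id}$, which is precisely the assertion that $V$ is a scaled isometry.

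Next I would deduce the condition number. From $VV^{*}=m\cdot\operatorname{Id}$ and Equation \eqref{easy} we obtain
$$
\|V\|=\sqrt{\operatorname{Tr}(VV^{*})}=\sqrt{m\cdot m}=m.
$$
The same identity yields $V^{-1}=\tfrac{1}{m}V^{*}$, so
$$
\|V^{-1}\|=\tfrac{1}{m}\|V^{*}\|=\tfrac{1}{m}\|V\|=1,
$$
using that the Frobenius norm is invariant under conjugate transpose. Combining gives $\operatorname{Cond}(V_{\Phi_n})=\|V\|\cdot\|V^{-1}\|=m$, as claimed.

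There is no serious obstacle here; the entire proof pivots on the single algebraic fact $\zeta^{m}=-1$ for primitive $2^{k}$-th roots, which converts off-diagonal entries of $VV^{*}$ into sums of all $m$-th roots of unity. This is exactly what fails in the general cyclotomic case, where the primitive $n$-th roots do not form a multiplicative group closed under ratios landing in a fixed cyclic subgroup, and it is precisely the reason why the power-of-two case is so much easier than the rest and motivates the subsequent sections of the paper.
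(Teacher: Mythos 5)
Your proof is correct and follows essentially the same route as the paper: both establish $V_{\Phi_n}V_{\Phi_n}^{*}=m\cdot\operatorname{Id}$ by using $\zeta^{m}=-1$ to make the off-diagonal geometric sums vanish (the paper writes the sum in closed form $\frac{1-\zeta_i^m\overline{\zeta_j}^m}{1-\zeta_i\overline{\zeta_j}}$, you argue via the ratio being a nontrivial $m$-th root of unity, which is the same calculation), and then both conclude $\|V\|=m$, $V^{-1}=m^{-1}V^{*}$, $\|V^{-1}\|=1$, hence $\operatorname{Cond}(V_{\Phi_n})=m$.
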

\begin{proof}The proof follows the same reasoning as in Example \ref{excyclotot}, but we give the full details for the convenience of the reader, to highlight that this phenomenon is particular to this situation. So, to see that $V_{\Phi_n}$ is a scaled isometry, observe that when we multiply $V_{\Phi_n}$ by its conjugate transposed, the elements over the diagonal in the product matrix are identically $m$, and outside the diagonal, the element in position $(i,j)$ in the product matrix equals
$$
\sum_{k=0}^{m-1}\zeta_i^k\overline{\zeta_j}^k=\frac{1-\zeta_i^m\overline{\zeta_j}^m}{1-\zeta_i\overline{\zeta_j}}.
$$
But since $\zeta_i$ are $n$-primitive roots (and so are $\overline{\zeta_i}$), then $\zeta_i^m=-1$ and the sum vanishes. Hence, we have that
$$
V_{\Phi_n}V_{\Phi_n}^*=mId,
$$
and $m^{-1/2}V_{\Phi_n}$ is an isometry. For the condition number, we write $V_{\Phi_n}^{-1}=m^{-1}V_{\Phi_n}^*$, hence $||V_{\Phi_n}^{-1}||=1$. By Lemma \ref{easy}, the result follows.
\end{proof}

\subsection{General bounds} A useful expression of the inverse $V_{\Phi_n}^{-1}$ is as follows:
\begin{lem}Notations as before, $V_{\Phi_n}^{-1}=(w_{ij})$ with 
$$
w_{ij}=(-1)^{m-i}\frac{e_{m-i}(\overline{\zeta_j})}{\prod_{k\neq j}(\zeta_j-\zeta_k)},
$$
where $e_{m-i}$ is the elementary symmetric polynomial of degree $m-i$ in $m-1$ variables and $\overline{\zeta_j}=(\zeta_1,\zeta_2,...\zeta_{j-1},\zeta_{j+1},...,\zeta_m)$.
\end{lem}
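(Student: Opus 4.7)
The plan is to recognize that computing $V_{\Phi_n}^{-1}$ is equivalent to solving $m$ independent Lagrange interpolation problems at the nodes $\zeta_1,\ldots,\zeta_m$, and then to read off the coefficients of the resulting interpolating polynomials via Vieta's formulas.

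First, I would unwind what the identity $V_{\Phi_n}\cdot W=\operatorname{Id}$ means column by column. Writing the $(i,k)$-entry of $V_{\Phi_n}$ as $\zeta_i^{k-1}$ and the unknown entries of $W$ as $w_{kj}$, the identity reads
$$
\sum_{k=1}^{m}\zeta_i^{k-1}\,w_{kj}=\delta_{ij},\qquad i,j\in\{1,\ldots,m\}.
$$
Fixing $j$ and setting $P_j(x):=\sum_{k=1}^{m}w_{kj}\,x^{k-1}\in\mathbb{C}[x]$, this says precisely that $P_j$ is a polynomial of degree at most $m-1$ satisfying $P_j(\zeta_i)=\delta_{ij}$. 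Since $\zeta_1,\ldots,\zeta_m$ are pairwise distinct, such a polynomial exists and is unique, and is given by the Lagrange interpolation formula
$$
P_j(x)=\prod_{k\neq j}\frac{x-\zeta_k}{\zeta_j-\zeta_k}.
$$

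Next, I would apply Vieta's formulas to the numerator. Since $\overline{\zeta_j}$ is the $(m-1)$-tuple obtained from $(\zeta_1,\ldots,\zeta_m)$ by removing $\zeta_j$, expanding the product yields
$$
\prod_{k\neq j}(x-\zeta_k)=\sum_{r=0}^{m-1}(-1)^{r}e_{r}(\overline{\zeta_j})\,x^{m-1-r},
$$
and substituting $r=m-i$ rewrites this as
$$
\prod_{k\neq j}(x-\zeta_k)=\sum_{i=1}^{m}(-1)^{m-i}e_{m-i}(\overline{\zeta_j})\,x^{i-1}.
$$
Dividing by the (nonzero, $i$-independent) constant $\prod_{k\neq j}(\zeta_j-\zeta_k)$ and comparing with the definition of $P_j$, the coefficient of $x^{i-1}$ is exactly
$$
w_{ij}=(-1)^{m-i}\,\frac{e_{m-i}(\overline{\zeta_j})}{\prod_{k\neq j}(\zeta_j-\zeta_k)},
$$
which is the claimed formula. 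There is no real obstacle here; the only point requiring care is the bookkeeping of indices, i.e.\ making sure that the row index $i$ of $W$ corresponds to the power $x^{i-1}$ and hence to the symmetric function $e_{m-i}$, which is precisely what the substitution $r=m-i$ accomplishes.
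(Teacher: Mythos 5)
Your proof is correct and follows essentially the same route as the paper's: interpret the columns of $V_{\Phi_n}^{-1}$ as coefficient vectors of the Lagrange basis polynomials $P_j$ at the nodes $\zeta_1,\ldots,\zeta_m$, then read off the coefficients via Vieta's formulas. You spell out the Vieta's-formula bookkeeping more explicitly than the paper does (and correctly write the Lagrange product over the index $k$, whereas the paper has a small typo using $m$), but the argument is the same.
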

\begin{proof}Starting with the definition of the product matrix and the inverse, we have $\sum_{k=1}^mw_{kj}\zeta_i^{k-1}=\delta_{ij}$, hence, the polynomial $P_j(x)=\sum_{k=1}^mw_{kj}x^{k-1}$ is the $j$-th Lagrange basis polynomial for the Vandermonde nodes and the $j$-th row of $V_{\Phi_n}^{-1}$ consists of the coefficients of $P_j$. The result holds from identifying the coefficients of the Lagrange basis polynomials
$$
P_j(x)= \prod_{k\neq j}\frac{x-\zeta_m}{\zeta_j-\zeta_m}.
$$
\end{proof}

\begin{rem} Notice that for each $1\leq i\leq m$, up to sign, the denominator of $w_{ij}$ equals $\Phi_n'(\zeta_j)$.
\label{denomcyclo}
\end{rem}

Another useful result:

\begin{lem} Let $E_i(\overline{\zeta})$ denote the symmetric function of degree $i$ in all the roots $\zeta_k$ for $1\leq k\leq m$ of an irreducible polynomial of degree $m$.Then
$$
\begin{array}{l}
E_1(\overline{\zeta})=\zeta_j+e_1(\overline{\zeta_j}),\\
E_i(\overline{\zeta})=\zeta_je_{i-1}(\overline{\zeta_j})+e_i(\overline{\zeta_j})\mbox{ for }2\leq i\leq m-1,\\
E_m(\overline{\zeta})=\zeta_je_{m-1}(\overline{\zeta_j}).
\end{array}
$$
\label{symcompleted}
\end{lem}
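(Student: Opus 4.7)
The plan is to view the three identities as a single decomposition of $E_i(\overline{\zeta})$ according to whether the variable $\zeta_j$ appears in each monomial. By definition,
$$
E_i(\overline\zeta)=\sum_{\substack{S\subseteq\{1,\dots,m\}\\ |S|=i}}\;\prod_{k\in S}\zeta_k.
$$
I would split this sum into two parts: the $i$-subsets $S$ not containing $j$, which contribute exactly $e_i(\overline{\zeta_j})$, and the $i$-subsets $S$ containing $j$, which can each be written as $\zeta_j\cdot\prod_{k\in S\setminus\{j\}}\zeta_k$ with $S\setminus\{j\}$ ranging over $(i-1)$-subsets of $\{1,\dots,m\}\setminus\{j\}$, contributing $\zeta_j\,e_{i-1}(\overline{\zeta_j})$. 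Adding these gives $E_i(\overline\zeta)=\zeta_j\,e_{i-1}(\overline{\zeta_j})+e_i(\overline{\zeta_j})$ for all $1\le i\le m$.

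The first and third formulas are then just the boundary cases of this single identity, once the natural conventions are applied: the case $i=1$ reduces to $E_1(\overline\zeta)=\zeta_j+e_1(\overline{\zeta_j})$ because $e_0(\overline{\zeta_j})=1$, while the case $i=m$ collapses to $E_m(\overline\zeta)=\zeta_j\,e_{m-1}(\overline{\zeta_j})$ because there are no $m$-element subsets of the $(m-1)$-element tuple $\overline{\zeta_j}$, so $e_m(\overline{\zeta_j})=0$.

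Alternatively, and perhaps more cleanly for the write-up, I would obtain all three equalities simultaneously by comparing coefficients of $t^i$ in the generating function identity
$$
\sum_{i=0}^{m}E_i(\overline\zeta)\,t^{i}=\prod_{k=1}^{m}(1+t\zeta_k)=(1+t\zeta_j)\prod_{k\neq j}(1+t\zeta_k)=(1+t\zeta_j)\sum_{i=0}^{m-1}e_i(\overline{\zeta_j})\,t^{i},
$$
expanding the right-hand side and matching powers of $t$. There is no real obstacle here, since the statement is a standard combinatorial identity; the only care required lies in the book-keeping at the two boundaries $i=1$ and $i=m$, which the generating-function version handles automatically.
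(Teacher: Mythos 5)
Your proposal is correct and its core argument — splitting the monomials of $E_i(\overline{\zeta})$ according to whether $\zeta_j$ appears — is exactly the paper's proof; you merely treat the boundary cases $i=1$ and $i=m$ as degenerate instances of the same split (via $e_0=1$ and $e_m(\overline{\zeta_j})=0$) rather than handling them separately as trace and norm, and the generating-function reformulation is an equivalent packaging of the same idea.
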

\begin{proof}For $i=1,m$ the result is obvious, as $E_1(\overline{\zeta})$ and $E_m(\overline{\zeta})$ are respectively the trace and norm of any of the $\zeta_k$. For $1<i<m$, $E_i(\overline{\zeta})$ is the sum of a) products of polynomial expressions in the roots of degree $i$ containing $\zeta_j$, namely $\zeta_je_{i-1}(\overline{\zeta_j})$ and b) products of expressions, also of degree $i$ not containing $\zeta_j$, namely $e_i(\overline{\zeta_j})$.
\end{proof}
We use this result to bound the numerators of the $w_{ij}$ in terms of the coefficients of the cyclotomic polynomial.
\begin{defn}For $n\geq 2$, let $A(n)$ denote from now on the maximum coefficient of $\Phi_n(x)$ in absolute value.
\end{defn}
\begin{prop}For $n\geq 2$, $m=\phi(n)$  and $1\leq k\leq m$, we have the upper bound
$$
|e_{m-k}(\overline{\zeta}_j))|\leq (k-1)A(n)+1.
$$
\label{numerator}
\end{prop}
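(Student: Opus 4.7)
The plan is to prove the bound by induction on $k$, exploiting the recurrence given in Lemma~\ref{symcompleted} that relates the elementary symmetric polynomials in the $m-1$ variables $\overline{\zeta_j}$ to those in all $m$ variables, whose moduli are directly controlled by $A(n)$.

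First I would record the basic ingredient: since $\Phi_n(x)=\prod_{i=1}^m(x-\zeta_i)=\sum_{i=0}^m(-1)^iE_i(\overline{\zeta})x^{m-i}$, the numbers $E_i(\overline{\zeta})$ are, up to sign, the coefficients of $\Phi_n$, so $|E_i(\overline{\zeta})|\le A(n)$ for $1\le i\le m-1$. Moreover, the constant term $(-1)^mE_m(\overline{\zeta})=\prod_i(-\zeta_i)$ has absolute value $1$ for every $n\ge 2$ (the primitive $n$-th roots of unity have product $\pm1$), so $|E_m(\overline{\zeta})|=1$.

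Next I would rewrite the recurrence from Lemma~\ref{symcompleted} to decrease the index in the $\overline{\zeta_j}$ variables: for $1\le k\le m-1$,
$$
e_{m-k-1}(\overline{\zeta_j})=\zeta_j^{-1}\bigl(E_{m-k}(\overline{\zeta})-e_{m-k}(\overline{\zeta_j})\bigr),
$$
so that, using $|\zeta_j|=1$ and the triangle inequality,
$$
|e_{m-k-1}(\overline{\zeta_j})|\le |E_{m-k}(\overline{\zeta})|+|e_{m-k}(\overline{\zeta_j})|.
$$

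The induction then unfolds cleanly. For the base case $k=1$, the last identity of Lemma~\ref{symcompleted} gives $e_{m-1}(\overline{\zeta_j})=\zeta_j^{-1}E_m(\overline{\zeta})$, hence $|e_{m-1}(\overline{\zeta_j})|=1=(1-1)A(n)+1$. For the inductive step, if $|e_{m-k}(\overline{\zeta_j})|\le(k-1)A(n)+1$, combining with $|E_{m-k}(\overline{\zeta})|\le A(n)$ in the displayed inequality yields $|e_{m-k-1}(\overline{\zeta_j})|\le A(n)+(k-1)A(n)+1=kA(n)+1$, which is the bound for $k+1$. This exhausts $1\le k\le m$ (the case $k=m$ reducing to the trivial $|e_0(\overline{\zeta_j})|=1$), and the proof is complete.

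No step is really an obstacle; the only point worth being explicit about is that the bound $|E_i(\overline{\zeta})|\le A(n)$ is only needed for $1\le i\le m-1$, because the inductive step applies it with $i=m-k$ and $1\le k\le m-1$, while the case $i=m$ is absorbed into the base case through the sharper estimate $|E_m(\overline{\zeta})|=1$, which is precisely what makes the additive constant in the bound equal to $1$ rather than $A(n)$.
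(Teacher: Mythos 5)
Your argument is correct and follows essentially the same route as the paper's own proof: induction on $k$ starting from $|E_m(\overline{\zeta})|=1$ and $|e_{m-1}(\overline{\zeta_j})|=1$, then peeling off one index at a time via the recurrence in Lemma~\ref{symcompleted} together with $|E_i(\overline{\zeta})|\le A(n)$. You merely make a couple of steps more explicit than the paper does (solving the recurrence for $e_{m-k-1}$ and remarking why the additive constant is $1$ rather than $A(n)$), but the underlying decomposition and inductive structure are identical.
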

\begin{proof}By induction, we start observing that $|E_{m}(\overline{\zeta})|=1$ (as it is the product of all primitive roots) and from Lemma \ref{symcompleted} we have $|e_{m-1}(\overline{\zeta_j})|=1$. Now assuming the result for $k$, since $|E_{m-k}(\overline{\zeta})|\leq A(n)$, we have again by Lemma \ref{symcompleted}
$$
|e_{m-(k+1)}(\overline{\zeta_j})|\leq |e_{m-k}(\overline{\zeta_j})|+|E_{m-k}(\overline{\zeta})|\leq kA(n)+1.
$$
\end{proof}
As we show next, to obtain a satisfactory bound of $w_{ij}$, we can reduce lower bounds for $|\Phi_n'(\zeta_j)|$ to lower bounds for $|\Phi_{rad(n)}'(\zeta_j^{n/rad(n)})|$.
\begin{prop}For $n\geq 2$, the following upper bound is valid:
$$
|w_{ij}|\leq rad(n)\frac{A(n)+1}{|\Phi'_{rad(n)}(\zeta_j^{n/rad(n)})|}.
$$
\label{bound1}
\end{prop}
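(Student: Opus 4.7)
The plan is to assemble three ingredients already at our disposal: the closed formula for $w_{ij}$ from the preceding lemma, the numerator bound in Proposition \ref{numerator}, and the derivative of the identity $\Phi_n(x)=\Phi_{rad(n)}(x^{n/rad(n)})$ from Proposition \ref{polycyclo2}. First I would combine the formula for $w_{ij}$ with Remark \ref{denomcyclo} to rewrite
$$
|w_{ij}| \;=\; \frac{|e_{m-i}(\overline{\zeta_j})|}{|\Phi'_n(\zeta_j)|}.
$$
Applying Proposition \ref{numerator} (with $k=i$, so $1\le i\le m$) and using $A(n)\ge 0$ together with $i\le m$, the numerator is bounded by $(i-1)A(n)+1 \le i(A(n)+1) \le m(A(n)+1)$. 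This reduces the problem to obtaining a matching lower bound for $|\Phi'_n(\zeta_j)|$ in terms of $|\Phi'_{rad(n)}(\zeta_j^{n/rad(n)})|$.

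Next I would set $s := n/rad(n)$ and differentiate the identity $\Phi_n(x) = \Phi_{rad(n)}(x^{s})$ from Proposition \ref{polycyclo2}. The chain rule yields
$$
\Phi'_n(\zeta_j) \;=\; s\,\zeta_j^{s-1}\,\Phi'_{rad(n)}(\zeta_j^{s}).
$$
Since every primitive $n$-th root of unity satisfies $|\zeta_j|=1$, the factor $\zeta_j^{s-1}$ contributes a modulus of $1$, hence
$$
|\Phi'_n(\zeta_j)| \;=\; \frac{n}{rad(n)}\,\bigl|\Phi'_{rad(n)}(\zeta_j^{n/rad(n)})\bigr|.
$$

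Substituting this into the preceding expression for $|w_{ij}|$ gives
$$
|w_{ij}| \;\le\; \frac{m(A(n)+1)\,rad(n)}{n\,\bigl|\Phi'_{rad(n)}(\zeta_j^{n/rad(n)})\bigr|},
$$
and since $m=\phi(n)\le n$ the factor $m/n$ is bounded above by $1$, yielding the claimed inequality. The argument is essentially a chain of direct substitutions; no genuine obstacle arises. The only point worth emphasizing is the role of Proposition \ref{polycyclo2}: it is precisely this identity that allows one to pass from $\Phi'_n$ (where the degree is $\phi(n)$, potentially huge when $n$ is highly composite) to $\Phi'_{rad(n)}$ (where the degree is only $\phi(rad(n))$), paying only a linear factor $rad(n)$ in the final estimate. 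This reduction is what permits the later sections to confine the difficult lower bounds on $|\Phi'|$ to the squarefree case.
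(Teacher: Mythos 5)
Your proof is correct and takes essentially the same route as the paper: combine the closed form for $w_{ij}$ (with Remark \ref{denomcyclo} identifying the denominator as $\Phi'_n(\zeta_j)$), the numerator bound of Proposition \ref{numerator}, and the chain-rule consequence of $\Phi_n(x)=\Phi_{rad(n)}(x^{n/rad(n)})$. You merely make explicit the bookkeeping step $(i-1)A(n)+1\le m(A(n)+1)$ together with $m/n\le 1$, which the paper's terse ``Using now Proposition \ref{numerator} the result follows'' leaves to the reader.
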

\begin{proof}Using Proposition \ref{polycyclo2}, we express $\Phi_n(x)=\Phi_{rad(n)}(x^{n/rad(n)})$, taking derivative and evaluating at $\zeta_j$ gives
$$
|\Phi_n'(\zeta_j)|=\frac{n}{rad(n)}|\Phi_{rad(n)}'(\zeta_j^{n/rad(n)})|.
$$
Using now Proposition \ref{numerator} the result follows.
\end{proof}
Hence we are confined to lower bound $|\Phi_n'(\zeta_j)|$ for $n$ square-free, what we do next. The main ingredient is the following result:

\begin{thm}[Bateman, \cite{bateman2}] Let $n=p_1...p_k$ with $p_1<...<p_k$. Then
$$
A(n)\leq  n^{2^{k-1}}. 
$$
\label{thacta}
\end{thm}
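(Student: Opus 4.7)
The plan is to argue by induction on the number of distinct prime factors $k = \omega(n)$, using the cyclotomic recurrences developed in Proposition \ref{polycyclo2}. The base case $k=1$, $n=p$ prime, is immediate since $\Phi_p(x) = 1 + x + \cdots + x^{p-1}$, yielding $A(p) = 1 \leq p = p^{2^0}$.

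For the inductive step, set $q = p_k$ and $m = p_1\cdots p_{k-1}$, so that $n = mq$ with $q \nmid m$. Proposition \ref{polycyclo2} gives $\Phi_m(x^q) = \Phi_n(x)\,\Phi_m(x)$. Writing $\Phi_m(x) = \sum_{i} a_i x^i$ and $\Phi_n(x) = \sum_{j} b_j x^j$, comparison of coefficients together with the observation $a_0 = 1$ produces the recursion
$$b_j = [q \mid j]\,a_{j/q} - \sum_{i=1}^{\min(j,\phi(m))} a_i\, b_{j-i}, \qquad b_0 = 1.$$
One would then like to propagate the inductive hypothesis $A(m) \leq m^{2^{k-2}}$ through this recursion to obtain $A(n) \leq n^{2^{k-1}}$; at the level of exponents this amounts to squaring.

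A naive Cauchy-product bound on the recursion grows geometrically in $j$ and is therefore too weak. The standard remedy, due to Bateman (cf.~\cite{bateman2}), is to switch to an analytic estimate. For $n > 1$, M\"obius inversion of $x^n - 1 = \prod_{d\mid n}\Phi_d(x)$ combined with $\sum_{d\mid n}\mu(n/d) = 0$ gives the product representation
$$\Phi_n(x) = \prod_{d\mid n}(1-x^d)^{\mu(n/d)},$$
and Cauchy's coefficient formula on the circle $|x| = \rho$ with $\rho < 1$ yields $|b_j| \leq \rho^{-j}\max_{|x|=\rho}|\Phi_n(x)|$. Estimating the maximum modulus by splitting the product according to the sign of $\mu(n/d)$ and then choosing $\rho$ appropriately close to $1$ (typically of the form $\rho = 1 - c/\log n$) should deliver $\log A(n) \leq 2^{k-1}\log n$, which is the claim.

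The main obstacle I anticipate is precisely the sharp estimate of $\max_{|x|=\rho}|\Phi_n(x)|$: divisors with $\mu(n/d) = +1$ contribute small factors to the product while divisors with $\mu(n/d) = -1$ contribute large ones, and the delicate balance between them --- rooted in the symmetry $|\{d\mid n : \mu(n/d) = +1\}| = |\{d \mid n : \mu(n/d) = -1\}| = 2^{k-1}$ for squarefree $n$ with $k\geq 1$ --- is what produces the exponent $2^{k-1}$. Carrying this out rigorously is the technical heart of Bateman's argument, and for the purposes of this paper I would simply quote the resulting inequality from \cite{bateman2} and apply it.
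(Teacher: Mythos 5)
The paper does not prove this statement at all --- it is imported verbatim from Bateman \cite{bateman2} --- and since your proposal also ends by quoting the inequality from that reference, your treatment agrees with the paper's. One caution should you ever try to complete your analytic sketch: with $\rho=1-c/\log n$ the Cauchy factor $\rho^{-j}$ is of size $e^{cj/\log n}$, hence as large as $e^{cn/\log n}$ for $j$ near $\phi(n)$, which is the regime of the subexponential bound of Theorem \ref{batemanth} rather than of the polynomial bound $n^{2^{k-1}}$; for the latter one must take $1-\rho$ of order $1/n$ (so that $\rho^{-j}=O(1)$) and then bound each of the $2^{k-1}$ factors $|1-x^{d}|^{-1}$ with $\mu(n/d)=-1$ by roughly $n/d$.
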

With this remarkable result at hand we can prove this technical lemma:
\begin{lem}Let $n=p_1...p_k$ with $p_1<...<p_k$. Then
$$
\frac{1}{|\Phi_n'(\zeta_j)|}\leq n^{2^k+k}.
$$
\label{lemabrace}
\end{lem}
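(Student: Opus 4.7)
The plan is to reduce the estimate to a product of sines via the multiplicative structure of $x^n-1$. First I will differentiate the identity $\prod_{d\mid n}\Phi_d(x)=x^n-1$ and evaluate at $\zeta=\zeta_j$: since $\Phi_n(\zeta)=0$, every term of the product rule other than the one coming from $d=n$ is annihilated, so
$$
\Phi_n'(\zeta)\prod_{d\mid n,\,d<n}\Phi_d(\zeta)=n\zeta^{n-1},
$$
and hence
$$
\frac{1}{|\Phi_n'(\zeta)|}=\frac{1}{n}\prod_{d\mid n,\,d<n}|\Phi_d(\zeta)|.
$$
This reduces the problem to upper-bounding the finite product on the right.

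Next, I will M\"obius-invert $\zeta^m-1=\prod_{e\mid m}\Phi_e(\zeta)$, swap the order of the resulting double product, and use that for $m\mid n$ with $m<n$ the inner sum telescopes as $\sum_{e\mid n/m,\,e<n/m}\mu(e)=-\mu(n/m)$ (because $n/m>1$). This yields
$$
\prod_{d\mid n,\,d<n}\Phi_d(\zeta)=\prod_{m\mid n,\,m<n}(\zeta^m-1)^{-\mu(n/m)}.
$$
Since $n=p_1\cdots p_k$ is squarefree, $\mu(n/m)=(-1)^{k-\omega(m)}$, so the product splits into a ``numerator'' part over $m$ with $\omega(m)\not\equiv k\pmod 2$ and a ``denominator'' part over $m$ with $\omega(m)\equiv k\pmod 2$, excluding $m=n$. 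The bijection between divisors of $n$ and subsets of $\{p_1,\dots,p_k\}$ together with $\sum_{j\text{ even}}\binom{k}{j}=\sum_{j\text{ odd}}\binom{k}{j}=2^{k-1}$ give $2^{k-1}$ factors in the numerator and $2^{k-1}-1$ factors in the denominator.

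Finally, I will bound the numerator trivially by $2^{2^{k-1}}$ using $|\zeta^m-1|\leq 2$. For each denominator factor, writing $\zeta=e^{2\pi i a/n}$ with $\gcd(a,n)=1$, the residue $am\bmod n$ is a positive multiple of $m$ smaller than $n$, whence
$$
|\zeta^m-1|=2\bigl|\sin(\pi(am\bmod n)/n)\bigr|\geq 2\sin(\pi m/n)\geq \frac{4m}{n}\geq\frac{4}{n},
$$
using $\sin(\pi/q)\geq 2/q$ for $q\geq 2$. Plugging these into the formula gives
$$
\frac{1}{|\Phi_n'(\zeta)|}\leq\frac{1}{n}\cdot\frac{2^{2^{k-1}}}{(4/n)^{2^{k-1}-1}}=2^{\,2-2^{k-1}}\,n^{\,2^{k-1}-2},
$$
which is comfortably below $n^{2^k+k}$ for all $k\geq 1$ and $n\geq 2$.

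The main obstacle I anticipate is the bookkeeping in the M\"obius step: one has to verify that swapping the products correctly produces the exponent $-\mu(n/m)$ (which depends crucially on $m<n$), and then count the parities of $\omega(m)$ among proper divisors of $n$ without off-by-one errors. The sine estimate and the final arithmetic are then essentially mechanical.
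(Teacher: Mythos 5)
Your proof is correct, and it takes a genuinely different route from the paper. The paper proceeds by iteration on the number of prime factors: it writes $\Phi_{p_1\cdots p_k}(x)=\Phi_{p_1\cdots p_{k-1}}(x^{p_k})/\Phi_{p_1\cdots p_{k-1}}(x)$, differentiates at $\zeta_j$, bounds the numerator $|\Phi_{p_1\cdots p_{k-1}}(\zeta_j)|$ by Bateman's coefficient estimate $A(n)\leq n^{2^{k-1}}$ (Theorem \ref{thacta}), and repeats $k-1$ times, summing a geometric series in the exponent to reach $n^{2^k+k}$. You instead differentiate $x^n-1=\prod_{d\mid n}\Phi_d(x)$ at $\zeta_j$ to get the exact identity $|\Phi_n'(\zeta_j)|\prod_{d\mid n,\,d<n}|\Phi_d(\zeta_j)|=n$, convert the remaining product by M\"obius inversion into $\prod_{m\mid n,\,m<n}(\zeta_j^m-1)^{-\mu(n/m)}$ (your bookkeeping of the exponent $-\mu(n/m)$ and of the $2^{k-1}$ versus $2^{k-1}-1$ factors in each parity class is right), and finish with the elementary sine bound $|\zeta_j^m-1|\geq 2\sin(\pi m/n)\geq 4m/n$ for the proper divisors appearing with negative exponent. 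What this buys is notable: your argument is self-contained (it needs no input from Bateman's theorem and no induction), and it yields the sharper estimate $1/|\Phi_n'(\zeta_j)|\leq 2^{2-2^{k-1}}n^{2^{k-1}-2}$, which is substantially stronger than the stated $n^{2^k+k}$ and would, if propagated, improve the exponents in Theorem \ref{main2} and Corollary \ref{general}. The paper's route, by contrast, reuses machinery (Proposition \ref{polycyclo2} and Theorem \ref{thacta}) that it needs elsewhere anyway, at the cost of a weaker bound.
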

\begin{proof}For $k=1$, writing $\Phi_p(x)=\frac{x^p-1}{x-1}$ we have that $1/|\Phi_p'(\zeta_j)|=\frac{|\zeta_j-1|}{p}\leq \frac{2}{p}$, so the result is clear. 

Now, for $k>1$, use Lemma \ref{polycyclo2} to write:
$$
\Phi_{p_1...p_k}(x)=\frac{\Phi_{p_1...p_{k-1}}(x^{p_k})}{\Phi_{p_1...p_{k-1}}(x)}.
$$
Taking the derivative at $\zeta_j$ and observing that $\zeta_j^{p_k}$ is a $p_1...p_{k-1}$-th primitive root, we have:
$$
\frac{1}{|\Phi_n'(\zeta_j)|}\leq\frac{|\Phi_{p_1...p_{k-1}}(\zeta_j)|}{p_k|\Phi'_{p_1...p_{k-1}}(\zeta_j^{p_k})|}\leq\frac{2A(p_1...p_{k-1})\phi(p_1...p_{k-1})}{p_k|\Phi'_{p_1...p_{k-1}}(\zeta_j^{p_k})|},
$$
which by Theorem \ref{thacta} can be upper bounded as
$$
\frac{1}{|\Phi_n'(\zeta_j)|}\leq \frac{ 2n^{2^{k-1}+1}}{p_k|\Phi'_{p_1...p_{k-1}}(\zeta_j^{p_k})|}\leq \frac{ n^{2^{k-1}+1}}{|\Phi'_{p_1...p_{k-1}}(\zeta_j^{p_k})|}.
$$
Repeating the argument for $1/|\Phi'_{p_1...p_{k-1}}(\zeta_j^{p_k})|$, iterating $k-1$ times and summing up the geometric series in the exponent of $n$ yields the result.
$$
\frac{1}{|\Phi_n'(\zeta_j)|}\leq  n^{2^k+k}.
$$
%Since for each $p>2$ we have $\phi(p)^2\geq p$ the result follows.
\end{proof}
We can now prove the main result of this section:
\begin{thm}Denote $m=\phi(n)$ as usual. If $rad(n)=p_1...p_k$, then:
$$
Cond(V_{\Phi_n})\leq 2rad(n)n^{2^k+k+2}A(n).%2^{k+1}m^2(\phi(rad(n)))^{2^k}A(n).
$$
\label{main2}
\end{thm}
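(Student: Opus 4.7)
The plan is to collect the ingredients that have already been prepared and perform the final bookkeeping. Since $\mathrm{Cond}(V_{\Phi_n}) = \|V_{\Phi_n}\| \cdot \|V_{\Phi_n}^{-1}\|$, I need to estimate each factor separately. The first factor is immediate: by the Frobenius norm formula (\ref{easy}) applied to the Vandermonde matrix in (\ref{latticebij2}), $\|V_{\Phi_n}\|^2 = \sum_{i,j=1}^m |\zeta_j^{i-1}|^2 = m^2$ because every primitive root of unity has modulus one, so $\|V_{\Phi_n}\| = m$.

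The heart of the argument is controlling $\|V_{\Phi_n}^{-1}\|$. Using the explicit formula for $w_{ij}$ in Lemma \ref{symcompleted}'s preceding lemma and the observation in Remark \ref{denomcyclo} that the denominator is $\pm \Phi_n'(\zeta_j)$, I invoke Proposition \ref{bound1} to obtain
$$
|w_{ij}| \leq \mathrm{rad}(n)\,\frac{A(n)+1}{|\Phi'_{\mathrm{rad}(n)}(\zeta_j^{n/\mathrm{rad}(n)})|}.
$$
Because $\zeta_j$ is a primitive $n$-th root of unity, $\zeta_j^{n/\mathrm{rad}(n)}$ is a primitive $\mathrm{rad}(n)$-th root, and $\mathrm{rad}(n) = p_1 \cdots p_k$ is square-free with exactly $k$ distinct prime factors, so Lemma \ref{lemabrace} applies in its hypotheses and delivers
$$
\frac{1}{|\Phi'_{\mathrm{rad}(n)}(\zeta_j^{n/\mathrm{rad}(n)})|} \leq \mathrm{rad}(n)^{2^k+k}.
$$
Substituting back, $|w_{ij}| \leq (A(n)+1)\,\mathrm{rad}(n)^{2^k+k+1}$, uniformly in $i$ and $j$.

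Summing the squares of all $m^2$ entries and taking the square root then yields $\|V_{\Phi_n}^{-1}\| \leq m(A(n)+1)\,\mathrm{rad}(n)^{2^k+k+1}$. Multiplying by $\|V_{\Phi_n}\| = m$ produces
$$
\mathrm{Cond}(V_{\Phi_n}) \leq m^2 (A(n)+1)\,\mathrm{rad}(n)^{2^k+k+1}.
$$
To finish, I use the trivial estimates $m \leq n$, $\mathrm{rad}(n)^{2^k+k} \leq n^{2^k+k}$, and $A(n)+1 \leq 2A(n)$ (valid since $A(n) \geq 1$ for $n \geq 2$); collecting these gives exactly $\mathrm{Cond}(V_{\Phi_n}) \leq 2\,\mathrm{rad}(n)\,n^{2^k+k+2}A(n)$, as claimed.

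There is no real obstacle at this stage: all the analytic work has already been performed in Propositions \ref{numerator}, \ref{bound1} and Lemma \ref{lemabrace}. The only delicate point worth double-checking is the reduction to $\mathrm{rad}(n)$, which uses the factorisation $\Phi_n(x) = \Phi_{\mathrm{rad}(n)}(x^{n/\mathrm{rad}(n)})$ from Proposition \ref{polycyclo2}; once this is in place, the remaining steps are a matter of bookkeeping exponents of $n$ and of $\mathrm{rad}(n)$.
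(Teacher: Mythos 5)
Your proof is correct and follows essentially the same route as the paper: you bound each entry $|w_{ij}|$ via Proposition~\ref{bound1}, apply Lemma~\ref{lemabrace} to $\mathrm{rad}(n)$ (noting correctly that $\zeta_j^{n/\mathrm{rad}(n)}$ is a primitive $\mathrm{rad}(n)$-th root so the square-free hypothesis is met), sum the $m^2$ entries to get $\|V_{\Phi_n}^{-1}\|$, and finish with $m\le n$, $\mathrm{rad}(n)^{2^k+k}\le n^{2^k+k}$, and $A(n)+1\le 2A(n)$. The only cosmetic difference is that the paper folds the $A(n)+1\le 2A(n)$ step into its opening application of Proposition~\ref{bound1} rather than deferring it to the end.
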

\begin{proof}We start using Prop. \ref{bound1}:
$$
|w_{ij}|\leq\frac{2rad(n)A(n)}{|\Phi_{rad(n)}'(\zeta_j^{n/rad(n)})|}.
$$
Next, by Lemma \ref{lemabrace}, we can write
$$
|w_{ij}|\leq 2rad(n)n^{2^k+k}A(n)%2^{k+1} \phi(rad(n))^{2^k}A(n),
$$
and the result now follows.
\end{proof}
\begin{cor}Let $k\geq 1$ be fixed. If $n$ is the product of at most $k$ different primes, then $Cond(V_{\Phi_n})$ is polynomial in $n$. More in general, let $\mathcal{F}_k$ be a family of cyclotomic polynomials whose degree is divisible by at most $k$ different primes.  Assume that $A(n)=\mathcal{O}(n^r)$ for polynomials in $\mathcal{F}_k$. Then,
$$
Cond(V_{\Phi_n})=\mathcal{O}(n^{2^k+k+3+r}).
$$
\label{general}
\end{cor}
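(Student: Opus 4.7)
The plan is to read the corollary as a direct specialisation of Theorem \ref{main2}, with the extra ingredient that in the ``free'' first part, the hypothesis that $n$ has at most $k$ prime factors already forces $A(n)$ to be polynomially controlled by Bateman's theorem. So I do not expect any genuine obstacle: all the analytic work has already been absorbed into Theorem \ref{main2} (estimate of $|w_{ij}|$), Lemma \ref{lemabrace} (lower bound on $|\Phi'_n(\zeta_j)|$) and Theorem \ref{thacta} (Bateman). What remains is arithmetic book-keeping on the exponents of $n$.

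First I would handle the general second statement. By hypothesis there is a constant $C>0$ such that $A(n)\leq C\,n^{r}$ for every $\Phi_n\in\mathcal{F}_k$. Since $rad(n)\leq n$ always, plugging both estimates into Theorem \ref{main2} yields
\[
Cond(V_{\Phi_n})\;\leq\;2\,rad(n)\,n^{2^{k}+k+2}\,A(n)\;\leq\;2C\,n\cdot n^{2^{k}+k+2}\cdot n^{r}\;=\;2C\,n^{2^{k}+k+3+r},
\]
which is exactly the claimed $\mathcal{O}(n^{2^{k}+k+3+r})$. No non-trivial manipulation is involved beyond summing the exponents $1+(2^{k}+k+2)+r$.

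Next I would deduce the first (qualitative) statement from the second by producing an a priori polynomial bound for $A(n)$ under the sole hypothesis that $n$ has at most $k$ distinct prime factors. Here I invoke Proposition \ref{polycyclo2}: the identity $\Phi_n(x)=\Phi_{rad(n)}(x^{n/rad(n)})$ shows that the non-zero coefficients of $\Phi_n$ are precisely those of $\Phi_{rad(n)}$, so $A(n)=A(rad(n))$. Bateman's Theorem \ref{thacta} then gives $A(rad(n))\leq rad(n)^{2^{k-1}}\leq n^{2^{k-1}}$, so $A(n)=\mathcal{O}(n^{2^{k-1}})$ uniformly over all such $n$. Applying the second assertion with $r=2^{k-1}$ concludes that $Cond(V_{\Phi_n})=\mathcal{O}(n^{2^{k}+k+3+2^{k-1}})$, which is in particular polynomial in $n$.

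The only micro-point to watch is that Bateman's bound as quoted is stated for squarefree conductors; the reduction $A(n)=A(rad(n))$ via Proposition \ref{polycyclo2} is what allows this to be promoted to arbitrary $n$ with a bounded number of prime factors, and this same reduction is already the mechanism used inside Proposition \ref{bound1} to pass from $|\Phi'_{n}(\zeta_j)|$ to $|\Phi'_{rad(n)}(\zeta_j^{n/rad(n)})|$. Since all the hard inequalities have been done upstream, I would not expect this proof to be more than a few lines.
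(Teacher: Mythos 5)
Your proposal is correct and follows essentially the same route as the paper: the paper's proof likewise combines Theorem \ref{main2} with $rad(n)\leq n$, the observation $A(n)=A(rad(n))$ coming from Proposition \ref{polycyclo2}, and Bateman's bound (Theorem \ref{thacta}) to obtain $Cond(V_{\Phi_n})\leq 2\,rad(n)\,n^{2^k+k+2}\,n^{r}$. Your explicit reduction of the first statement to the second with $r=2^{k-1}$ only spells out what the paper leaves as ``straightforward.''
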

\begin{proof}
If $rad(n)$ is the product of at most a $k$, combining Theorem \ref{thacta}, Proposition 2.5 (observing that $A(n)=A(rad(n)$), and Theorem \ref{main2} we can upper bound $Cond(V_{\Phi_n})$ by $2rad(n)n^{2^k+k+2}n^r$, from which the result becomes straightforward.
\end{proof}

For instance, when $n=p^k$ the coefficients of $\Phi_n(X)$ are $0,1$. This is immediate for $k=1$ and it follows from Prop. \ref{polycyclo} for $k>1$. In this case, the general bound for $Cond(V_{\Phi_n})$ of Cor. \ref{general} is $\mathcal{O}(n^5)$.

When $n$ is the product of at most two odd prime factors, due to a 1883 result by Migotti, the coefficients of the cyclotomic polynomial belong to the set $\{0,\pm 1\}$ and likewise, from Prop. \ref{polycyclo2} the same holds for $n=p^rq^l$ with $r,l>1$. In this case, the general bound for $Cond(V_{\Phi_n})$ of Cor. \ref{general} is $\mathcal{O}(n^8)$.

Further, for $n=p^mq^lr^ts^u$ with $p<q<r<s$, we have $A(n)<n$. As before, the argument is reduced to the square free case via Prop. \ref{polycyclo2} and the square free case is due to Bang (\cite{bang}) for products of three primes and to Bloom (\cite{bloom}) for products of four primes. Hence,  in this case, the general bound in these cases is at most $\mathcal{O}(n^{23})$. 

We can refine these polynomial bounds for $k=1,2,3$ in the next section, but the moral of Theorem \ref{main2} is clear: whenever one finds that cyclotomic polynomials whose degree is divisible by at most a fixed number of primes has an explicit polynomially bounded $A(n)$, one immediately has an rough but still polynomial bound for the distortion of the corresponding RLWE/PLWE-problem. 

%We conclude this section with another remarkable consequence:
%\begin{cor}If $n$ is the product of at most a constant number of different primes, then $Cond(V_{\Phi_n})$ is polynomial in $n$.
%\end{cor}
%\begin{proof}
%If $n$ is a product of at most a constant number of different primes, combining Theorem \ref{thacta} and Proposition 2.5, it follows that $A(n)$ is polynomial in $n$. Hence, the result follows from Theorem \ref{main2}
%\end{proof}

\section{Sharper bounds for $k\leq 3$.}
In this section we obtain more precise bounds for the condition numbers $V_{\Phi_n}$ where $rad(n)$ is product of $k$ primes with $k\leq 3$. Our arguments can be easily generalised to other cases where $A(n)$ is still polynomial. Denote as usual $m=\phi(n)$.

\subsection{The case $k=1$.} For $n=p^l$, $p>2$, let us  apply Prop.\ref{bound1} to obtain:
\begin{equation}
|w_{i,j}|\leq \frac{2p}{|\Phi'_p(\zeta_j^{p^{l-1}})|}\leq 4.
\label{aux1}
\end{equation}
In particular, for $l=1$ we have:
$$
Cond(V_{\Phi_p})\leq 4(p-1)^2.
$$
Next, we tackle the case $l>1$. 
\begin{thm}Let $n=p^l$, $m=\phi(n)$ and $l>1$. Then
$$
Cond(V_{\Phi_n})\leq 4(p-1)m.
$$
\label{oneprime}
\end{thm}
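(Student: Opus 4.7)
The plan is to refine the uniform estimate $|w_{ij}|\leq 4$ from equation (\ref{aux1}) by exploiting the sparsity of $\Phi_{p^l}(x)=\sum_{s=0}^{p-1}x^{sp^{l-1}}$, which has only $p$ nonzero coefficients, all equal to $1$. First, I identify the $j$-th column of $V_{\Phi_n}^{-1}$ as the coefficient sequence of the Lagrange basis polynomial
\[
P_j(x)=\frac{\Phi_n(x)}{(x-\zeta_j)\Phi_n'(\zeta_j)},
\]
as already observed in the proof of the formula for the entries $w_{ij}$. Writing $\Phi_n(x)=\Phi_p(x^{p^{l-1}})$ (Proposition \ref{polycyclo2}) and using the identity $|\Phi_p'(\omega)|=p/|\omega-1|$ for any primitive $p$-th root $\omega$ (already exploited in the proof of Lemma \ref{lemabrace}), one obtains $|\Phi_n'(\zeta_j)|=p^l/|\omega_j-1|$, where $\omega_j:=\zeta_j^{p^{l-1}}$ is a primitive $p$-th root of unity.

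Next, I would compute the coefficients $q_k$ of $Q_j(x):=\Phi_n(x)/(x-\zeta_j)=\sum_k q_k x^k$ by iterating the division recurrence $q_{k-1}=a_k+\zeta_j q_k$ starting from $q_{m-1}=1$, where $a_k$ is the coefficient of $x^k$ in $\Phi_n$. Between consecutive nonzero positions of $\Phi_n$ the recurrence collapses to $q_{k-1}=\zeta_j q_k$, so induction on the block index yields, on the $s$-th block $k\in[(p-1-s)p^{l-1},(p-s)p^{l-1}-1]$,
\[
q_k=\zeta_j^{(p-s)p^{l-1}-1-k}\bigl(1+\omega_j+\cdots+\omega_j^{s-1}\bigr),\qquad s=1,\ldots,p-1.
\]
Taking absolute values and summing the geometric progression gives $|q_k|=|\omega_j^s-1|/|\omega_j-1|\leq 2/|\omega_j-1|$.

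Combining the two estimates, every entry of $V_{\Phi_n}^{-1}$ satisfies
\[
|w_{ij}|=\frac{|q_{i-1}|}{|\Phi_n'(\zeta_j)|}\leq\frac{2/|\omega_j-1|}{p^l/|\omega_j-1|}=\frac{2}{p^l},
\]
so the potentially dangerous factor $|\omega_j-1|^{-1}$ cancels exactly. Consequently $\|V_{\Phi_n}^{-1}\|^2\leq 4m^2/p^{2l}$, giving $\|V_{\Phi_n}^{-1}\|\leq 2m/p^l=2(p-1)/p$. Combined with $\|V_{\Phi_n}\|=m$ (which follows from $|\zeta_j|=1$ and definition \eqref{easy}), I conclude $Cond(V_{\Phi_n})\leq 2(p-1)m/p\leq 4(p-1)m$, with plenty of room to spare.

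The main obstacle is isolating and inductively verifying the closed form for $q_k$; once produced, the proof reduces to an elementary substitution. The essential phenomenon is the exact cancellation of the possibly small factor $|\omega_j-1|^{-1}$ between the numerator and denominator of $w_{ij}$, a cancellation forced by the highly sparse and regular shape of $\Phi_{p^l}$ and not to be expected for cyclotomic moduli with several prime factors.
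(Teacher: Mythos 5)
Your proof is correct, and it in fact yields a sharper bound than the paper's. Both proofs share the same skeleton: identify the entries $w_{ij}$ of $V_{\Phi_n}^{-1}$ as (normalized) coefficients of the Lagrange basis polynomials, use $\Phi_{p^l}(x)=\Phi_p(x^{p^{l-1}})$ to evaluate $\Phi_n'(\zeta_j)$, and exploit the sparsity of $\Phi_{p^l}$ to control the numerator. The difference lies in how the numerator is handled. The paper bounds $|e_{m-i}(\overline{\zeta_j})|$ via the recursion of Lemma~\ref{symcompleted}, using that the full symmetric functions $E_k(\overline{\zeta})$ are $0$ or $\pm 1$; iterating over the at most $p-1$ nonzero $E_k$ gives the coarse triangle-inequality bound $|e_{m-i}(\overline{\zeta_j})|\leq p-1$, which combined with $|\Phi_n'(\zeta_j)|\geq p^l/2$ yields $|w_{ij}|\leq 2(p-1)/p^l$ and hence $Cond(V_{\Phi_n})\leq 2(p-1)m$. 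You instead run the division recurrence explicitly and obtain the closed form $q_k=\zeta_j^{(p-s)p^{l-1}-1-k}(1+\omega_j+\cdots+\omega_j^{s-1})$, whose modulus is exactly $|\omega_j^s-1|/|\omega_j-1|$. Since $|\Phi_n'(\zeta_j)|=p^l/|\omega_j-1|$ exactly, the dangerous factor $|\omega_j-1|^{-1}$ cancels, giving the uniform $|w_{ij}|\leq 2/p^l$ and the tighter $Cond(V_{\Phi_n})\leq 2(p-1)m/p<2m$. The two numerators agree up to sign --- $q_{i-1}=(-1)^{m-i}e_{m-i}(\overline{\zeta_j})$ --- so your computation is really a refinement of the paper's estimate: tracking the exact geometric-sum structure instead of applying the triangle inequality term by term, and pairing it with the exact (not merely lower-bounded) derivative. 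The refinement genuinely improves the constant by a factor of roughly $p-1$, and, as you note, the cancellation is a feature of the prime-power case that one should not expect for conductors with several prime factors, where the roots are no longer a single shifted geometric progression.
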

\begin{proof}
Instead of using Prop. \ref{bound1} we provide a finer bound. We start by writing
$$
|w_{i,j}|=\frac{|e_{m-i}(\zeta_j)|}{|\Phi'_{p^l}(\zeta_j)|}.
$$
To bound the numerator, observe first that for $1\leq s<p^{l-1}$ and for $1\leq r\leq p-1$, we have $E_{p^{l-1}(p-r)-s}=0$  and $|E_{p^{l-1}(p-r)}|=1$. Using Lem. \ref{symcompleted} we obtain
$$
|e_{m-i}(\zeta_j)|\leq p-1.
$$
Now, since $\Phi_{p^l}(x)=\Phi_p(x^{p^{l-1}})$, taking derivative at $\zeta_j$ gives
$$
|w_{i,j}|\leq \frac{2(p-1)}{p^l }\leq \frac{2}{p^{l-1}},
$$
thus $||V_{\Phi_n}^{-1}||\leq 2(p-1)$ and the result follows.
\end{proof}

\subsection{The case $k=2$.} For $n=pq$, we again apply Prop.\ref{bound1} together with Mogotti's theorem $A(n)\in\{0,\pm 1\}$ to obtain:
\begin{equation}
|w_{i,j}|\leq \frac{2m}{|\Phi_{pq}'(\zeta_j)|}.
\label{aux1}
\end{equation}
Since $\Phi_{pq}(x)=\frac{\Phi_p(x^q)}{\Phi_p(x)}$, one has
$$
|w_{i,j}|\leq \frac{2m}{|\Phi_{pq}'(\zeta_j)|}=\frac{2m|\Phi_p(\zeta_j)|}{q|\Phi_p'(\zeta_j^q)|}\leq \frac{4m(p-1)}{pq}\leq 4m.
$$
and $Cond(V_{\Phi_n})\leq 4m^3$.

\begin{rem}If $n=2pq$, for odd $p$ and $q$, it is easy to see that $\Phi_n(x)=\Phi_{pq}(-x)$, so a minor modification of our previous analysis still applies.
\end{rem}
And another finer bound for $n=p^rq^l$ is as follows:

\begin{thm}Let $n=p^rq^l$, with odd primes $p$ and $q$ and $m=\phi(n)$. Then
$$
Cond(V_{\Phi_n})\leq 2\phi(rad(n))m^2.
$$
\label{twoprimes}
\end{thm}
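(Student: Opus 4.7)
Since $\|V_{\Phi_n}\|=m$ by Eq.~\ref{easy}, the task reduces to bounding the Frobenius norm of $V_{\Phi_n}^{-1}$. My plan is to obtain the uniform entrywise estimate
$$
|w_{ij}|\leq \frac{\phi(rad(n))}{d},\qquad d:=\frac{n}{rad(n)}=p^{r-1}q^{l-1};
$$
since $\|V_{\Phi_n}^{-1}\|\leq m\max_{i,j}|w_{ij}|$ and $m/d=\phi(pq)=\phi(rad(n))$, this gives $\|V_{\Phi_n}^{-1}\|\leq m\phi(rad(n))/d$ and hence $Cond(V_{\Phi_n})\leq m^{2}\phi(rad(n))/d\leq 2\phi(rad(n))m^{2}$.

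For the numerator of $w_{ij}$, Proposition~\ref{polycyclo2} gives $\Phi_n(x)=\Phi_{pq}(x^{d})$, so the symmetric functions $E_s(\overline{\zeta})$ vanish unless $d\mid s$, and when $d\mid s$ they coincide (up to sign) with a coefficient of $\Phi_{pq}$. By Migotti's theorem these coefficients lie in $\{0,\pm 1\}$. Iterating Lemma~\ref{symcompleted} yields the expansion
$$
e_k(\overline{\zeta_j})=\sum_{t=0}^{k}(-\zeta_j)^{t}E_{k-t}(\overline{\zeta}),
$$
and for $0\leq k\leq m-1$ the number of surviving terms is at most $\lfloor k/d\rfloor+1\leq\phi(pq)$; since $|\zeta_j|=1$ this forces $|e_{m-i}(\overline{\zeta_j})|\leq\phi(rad(n))$, in the same spirit as the proof of Theorem~\ref{oneprime}.

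For the denominator, differentiating $\Phi_n(x)=\Phi_{pq}(x^{d})$ at a primitive $n$-th root $\zeta_j$ gives $|\Phi_n'(\zeta_j)|=d\,|\Phi_{pq}'(\eta)|$ with $\eta:=\zeta_j^{d}$ a primitive $pq$-th root, so the crux is a uniform lower bound $|\Phi_{pq}'(\eta)|\geq 1$. For this, I would differentiate the identity $\Phi_p(x^q)=\Phi_p(x)\Phi_{pq}(x)$ at $\eta$ (and use $\Phi_{pq}(\eta)=0$ together with $\Phi_p'(\xi)=p\xi^{p-1}/(\xi-1)$ for $\xi$ a primitive $p$-th root) to obtain the closed form
$$
|\Phi_{pq}'(\eta)|=\frac{pq\,|\eta-1|}{|\eta^p-1|\,|\eta^q-1|}.
$$
Writing $\eta=e^{2\pi ik/(pq)}$ with $\gcd(k,pq)=1$, the elementary inequality $\sin(\pi t)\geq 2t$ on $[0,1/2]$ (applied after folding $k\mapsto\min(k,pq-k)$) gives $|\eta-1|\geq 4/(pq)$, while $|\eta^p-1|,|\eta^q-1|\leq 2$ are trivial; plugging in collapses to $|\Phi_{pq}'(\eta)|\geq 1$.

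The main obstacle is precisely this trigonometric lower bound. The cruder analysis performed in the lines preceding the theorem for $n=pq$ only delivers $|w_{ij}|\leq 2m$ and a cubic condition number, essentially because it discards the cancellation between the factor $|\eta-1|$ in the numerator and the product $|\eta^p-1||\eta^q-1|$ in the denominator of $|\Phi_{pq}'(\eta)|$. Once $|\Phi_{pq}'(\eta)|\geq 1$ is secured, the Frobenius-norm summation is entirely routine and the factor $\phi(rad(n))$ enters purely through the coefficient count of $\Phi_n$.
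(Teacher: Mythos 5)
Your argument is correct and in fact sharper than the paper's own. You and the author agree on the overall strategy (bound each $|w_{ij}|$ by separately estimating the numerator $|e_{m-i}(\overline{\zeta_j})|$ and the denominator $|\Phi_n'(\zeta_j)|=d\,|\Phi_{pq}'(\zeta_j^d)|$, then multiply by $m^2$), but you diverge in both halves. For the numerator the paper settles for the crude $|e_i(\overline{\zeta_j})|\leq m$, obtained by counting all $m$ terms in the Lemma~\ref{symcompleted} expansion; you instead exploit that $\Phi_n(x)=\Phi_{pq}(x^d)$ is $d$-sparse, so only $\lfloor k/d\rfloor+1\leq\phi(pq)$ of the $E_{k-t}$ survive, giving $|e_{m-i}(\overline{\zeta_j})|\leq\phi(rad(n))$. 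This is exactly the trick the paper \emph{does} use in Theorem~\ref{oneprime} ($k=1$) but abandons at $k=2$, so your numerator bound is the more natural continuation of the author's own method. For the denominator the paper differentiates the quotient identity $\Phi_{pq}=\Phi_p(x^q)/\Phi_p(x)$ and applies the loose bounds $|\Phi_p(\eta)|\leq p$ and $1/|\Phi_p'(\eta^q)|\leq 2/p$, which only yields $1/|\Phi_{pq}'(\eta)|\leq 2$ (and implicitly discards a factor of $q$); you instead derive the exact closed form $|\Phi_{pq}'(\eta)|=pq\,|\eta-1|/(|\eta^p-1|\,|\eta^q-1|)$ and close the estimate with the elementary Jordan-type inequality $\sin(\pi t)\geq 2t$ on $[0,1/2]$ to get the clean $|\Phi_{pq}'(\eta)|\geq 1$. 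Net effect: you land on $|w_{ij}|\leq\phi(rad(n))/d$ versus the paper's $|w_{ij}|\leq 2\phi(rad(n))$, and hence $Cond(V_{\Phi_n})\leq m^2\phi(rad(n))/d$, which trivially implies the stated $2\phi(rad(n))m^2$ since $d\geq1$. So your proof is not only valid but gives a strictly better constant (and an extra $1/d$ improvement when $r,l>1$); the trigonometric lower bound is the genuinely new ingredient, and it replaces the paper's cruder inequality manipulations with a transparent identity.
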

\begin{proof}We use Prop. \ref{polycyclo2}:
$$
\Phi_n(x)=\Phi_{pq}(x^{p^{r-1}q^{l-1}}),
$$
hence by using recursively Lemma \ref{symcompleted} we have upper bound:
$$
|e_i(\overline{\zeta_j})|\leq m\mbox{ for }1\leq i\leq m-1.
$$
Hence
$$
|w_{ij}|\leq\frac{m}{p^{r-1}q^{l-1}|\Phi'_{pq}(\zeta_j^{p^{r-1}q^{l-1}})|},
$$
which by Prop \ref{polycyclo}, equals $\frac{m|\Phi_p(\zeta_j^{p^{r-1}q^{l-1}})|}{p^{r-1}q^{l-1}|\Phi_p'(\zeta_j^{p^{r-1}q^{l}})|}$, hence
$$
|w_{ij}|\leq\frac{2m|\Phi_p(\zeta_j^{p^{r-1}q^{l-1}})|}{p^rq^{l-1}}\leq \frac{2m}{p^{r-1}q^{l-1}},
$$
hence $||V_{\Phi_n}^{-1}||\leq 2\phi(rad(n))m$ and the result follows.
\end{proof}

\subsection{The case $k=3$.} The starting point is the following result:

\begin{thm}[Bang, \cite{bang}] For three different odd prime numbers $p<q<r$, it holds that
$$
A(pqr)\leq p-1.
$$
\label{thmbang}
\end{thm}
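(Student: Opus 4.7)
The plan is to exploit the tower identity from Proposition \ref{polycyclo2}, namely
\[
\Phi_{pqr}(x) \;=\; \frac{\Phi_{qr}(x^{p})}{\Phi_{qr}(x)},
\]
in combination with Migotti's theorem (already invoked in the case $k=2$), which guarantees that every coefficient of $\Phi_{qr}(x)$ lies in $\{-1,0,1\}$. Substituting $x \mapsto x^{p}$ shows that the numerator $\Phi_{qr}(x^{p})$ is also a sparse polynomial with coefficients in $\{-1,0,1\}$, supported only on exponents divisible by $p$. The target is therefore a careful analysis of the quotient of two polynomials whose coefficients are already under tight control.

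First I would invoke the explicit Lam--Leung description of $\Phi_{qr}$: there exist unique integers $\rho,\sigma$ with $0\leq\rho\leq r-1$, $0\leq\sigma\leq q-1$ and $\rho q+\sigma r = (q-1)(r-1)+1$, and
\[
\Phi_{qr}(x) \;=\; \sum_{i=0}^{\rho}\sum_{j=0}^{\sigma} x^{iq+jr} \;-\; \sum_{i=\rho+1}^{q-1}\sum_{j=\sigma+1}^{r-1} x^{iq+jr-qr}.
\]
This explicit formula (together with the fact that each exponent appearing is uniquely determined modulo $qr$ by the pair $(i,j)$) is the combinatorial backbone of the argument, and it gives a transparent monomial decomposition of $\Phi_{qr}(x^{p})$ as well.

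Next, writing $\Phi_{pqr}(x)=\sum_{k=0}^{m}c_{k}x^{k}$ and expanding the product $\Phi_{pqr}(x)\Phi_{qr}(x)=\Phi_{qr}(x^{p})$, the convolution identity $\sum_{j}c_{k-j}a_{j}=b_{k}$ (where $a_{j},b_{j}\in\{-1,0,1\}$ are the coefficients of $\Phi_{qr}(x)$ and $\Phi_{qr}(x^{p})$ respectively) can be solved recursively in $k$. The key combinatorial claim, which I would prove by induction on $k$, is that $c_{k}$ admits a signed-counting representation
\[
c_{k} \;=\; \#A_{k} \;-\; \#B_{k},
\]
where $A_{k}$ and $B_{k}$ are \emph{disjoint} subsets of $\{0,1,\ldots,p-1\}$ parametrised by the residues of $k$ modulo $q$ and modulo $r$, obtained by counting the number of ways to complete a representation of $k$ as $ap+iq+jr$ (respectively $ap+iq+jr-qr$) with $(i,j)$ in the positive (resp.\ negative) block of the Lam--Leung decomposition. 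Since $A_{k}\cup B_{k}\subseteq\{0,1,\ldots,p-1\}$ is a disjoint union, $|A_{k}|+|B_{k}|\leq p$, and the disjointness forces $|c_{k}|=|\#A_{k}-\#B_{k}|\leq p-1$, yielding $A(pqr)\leq p-1$.

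The main obstacle will be establishing the combinatorial description of $A_{k}$ and $B_{k}$ and, in particular, their disjointness. This requires careful bookkeeping with the Chinese Remainder Theorem representation of $k \pmod{qr}$ as $iq+jr$ with $0\leq i\leq q-1$, $0\leq j\leq r-1$, and a delicate handling of the ``carry'' produced by the $-qr$ shift between the positive and negative Lam--Leung blocks. Once this combinatorial matching is in place, the bound on $A(pqr)$ is immediate and the theorem follows. (It is worth noting that the constant $p-1$ is sharp, as shown by Möller and Beiter, so any proof must exploit the full strength of the disjointness rather than just a naive triangle-inequality estimate.)
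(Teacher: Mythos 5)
The paper does not prove this statement at all: it is quoted from Bang's 1895 article and used as an external input to Proposition \ref{prop3f} and Theorem \ref{threeprimes}, so there is no internal proof to compare yours with. Judged on its own merits, your plan follows a recognisable and legitimate route (the identity $\Phi_{pqr}(x)\Phi_{qr}(x)=\Phi_{qr}(x^{p})$, the Lam--Leung description of $\Phi_{qr}$, and a Kaplan-style signed count of completions indexed by residues modulo $p$), but as written it has a genuine gap precisely at the step that produces the stated bound.

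Concretely: even granting your central combinatorial claim exactly as stated --- $c_{k}=\#A_{k}-\#B_{k}$ with $A_{k},B_{k}$ disjoint subsets of $\{0,1,\dots,p-1\}$ --- disjointness only yields $|c_{k}|\leq p$, since nothing you say excludes the configuration $A_{k}=\{0,1,\dots,p-1\}$, $B_{k}=\emptyset$. The passage from $p$ to $p-1$ requires an extra argument ruling out the extreme case in which all $p$ residues contribute with one sign and none with the other; in the known proofs along these lines (Kaplan's lemma and its refinements) this exclusion is exactly where the substantive work lies, and your sketch does not address it, so the sentence ``the disjointness forces $|c_{k}|\leq p-1$'' is a non sequitur. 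In addition, the signed-counting representation itself, including the induction and the CRT/carry bookkeeping between the two Lam--Leung blocks, is only announced, not carried out, so the proposal is a plan rather than a proof. A minor but telling point: the constant $p-1$ is not sharp for $p>3$ --- M\"oller's construction attains $(p+1)/2$, Gallot and Moree push the lower bound to roughly $2p/3$, and upper bounds of order $\tfrac{3}{4}p$ are known --- so the sharpness claim you invoke to motivate the disjointness argument is inaccurate, though it does not affect the truth of the bound you are trying to prove.
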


In this case, we have:
\begin{prop}For $n=pqr$ it holds that $Cond(V_{\Phi_n})\leq 2m^4$.
\label{prop3f}
\end{prop}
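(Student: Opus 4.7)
The plan is to establish a uniform pointwise bound $|w_{ij}|\leq 2m^2$, from which the Frobenius-norm inequality $||V_{\Phi_n}^{-1}||^2\leq m^2\max_{i,j}|w_{ij}|^2$ immediately yields $||V_{\Phi_n}^{-1}||\leq 2m^3$ and hence $Cond(V_{\Phi_n})=m\cdot||V_{\Phi_n}^{-1}||\leq 2m^4$.

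The central computation is a closed form for $|\Phi'_{pqr}(\zeta_j)|$ obtained by iterated use of Proposition \ref{polycyclo2}. Writing $\Phi_{pqr}(x)=\Phi_{pq}(x^r)/\Phi_{pq}(x)$ and $\Phi_{pq}(y)=\Phi_p(y^q)/\Phi_p(y)$, and observing that $\zeta_j^r$ is a primitive $pq$-th root while $\zeta_j^{qr}$ is a primitive $p$-th root, implicit differentiation kills the vanishing cyclotomic factors and, combined with the elementary identity $|\Phi'_p(\eta)|=p/|\eta-1|$ valid for any primitive $p$-th root $\eta$, yields
\[
|\Phi'_{pqr}(\zeta_j)|=\frac{pqr}{|\zeta_j^{qr}-1|\cdot|\Phi_p(\zeta_j^r)|\cdot|\Phi_{pq}(\zeta_j)|}.
\]
The three factors in the denominator are then controlled by elementary estimates: $|\zeta_j^{qr}-1|\leq 2$, $|\Phi_p(\zeta_j^r)|\leq p$ (a sum of $p$ unimodular terms), and $|\Phi_{pq}(\zeta_j)|\leq\phi(pq)+1\leq pq$ (using Migotti's $\{0,\pm 1\}$ coefficient description for $\Phi_{pq}$ and the trivial term count), so that $|\Phi'_{pqr}(\zeta_j)|\geq r/(2p)$.

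For the numerator of $w_{ij}$, Bang's Theorem \ref{thmbang} gives $A(pqr)\leq p-1$, and Proposition \ref{numerator} then produces $|e_{m-i}(\overline{\zeta_j})|\leq (i-1)(p-1)+1\leq mp$ uniformly in $i$. Combining the two estimates, $|w_{ij}|\leq mp\cdot(2p/r)\leq 2mp\leq 2m^2$, where the last two inequalities use $p<r$ and the trivial bound $p\leq(p-1)(q-1)(r-1)=m$ (valid because $q\geq 5$ and $r\geq 7$ force $m\geq 24(p-1)$). The main technical hurdle will be the careful derivation of the derivative formula and the bookkeeping of which cyclotomic factors vanish at $\zeta_j$, $\zeta_j^r$, and $\zeta_j^{qr}$; once that is in place, everything else reduces to elementary bounds already exploited in the proofs of Theorems \ref{oneprime} and \ref{twoprimes}.
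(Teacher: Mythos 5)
Your proof is correct and follows essentially the same strategy as the paper: bound the numerators $|e_{m-i}(\overline{\zeta_j})|$ via Bang's theorem together with Proposition \ref{numerator}, obtain a closed form for $\Phi'_{pqr}(\zeta_j)$ by iterating Proposition \ref{polycyclo2} so that only non-vanishing cyclotomic values and one derivative $\Phi'_p$ survive, bound each factor elementarily, and finish with the Frobenius-norm bookkeeping. The only difference is the order of telescoping (you peel off $r$ then $q$, ending at $|\Phi'_p(\zeta_j^{qr})|=p/|\zeta_j^{qr}-1|$, while the paper's factorization ends at $\Phi'_q(\zeta_j^{pr})$ divided by $\Phi_{qr}(\zeta_j)\Phi_q(\zeta_j^p)$), which in fact gives you the slightly sharper intermediate estimate $1/|\Phi'_{pqr}(\zeta_j)|\leq 2p/r$ versus the paper's $2q/p$, though both yield the same final bound $2m^4$.
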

\begin{proof}Applying recursively Lemma \ref{symcompleted}, and Theorem \ref{thmbang} we obtain:
$$|e_i(\overline{\zeta_j})|\leq (m-i)(p-1).$$
To bound the denominators we apply Proposition \ref{polycyclo2}, which is our case gives:
$$
\Phi_{pqr}(X)=\frac{\Phi_{qr}(X^p)}{\Phi_{qr}(X)}=\frac{\Phi_{qr}(X^p)\Phi_r(X)}{\Phi_r(X^q)}=\frac{\Phi_q(X^{pr})\Phi_r(X)}{\Phi_r(X^q)\Phi_q(X^p)}.
$$
Now, taking the derivative and evaluating at $\zeta_j$, we obtain:
\begin{equation}
\Phi_{pqr}'(\zeta_j)=\frac{pr\zeta_j^{pr-1}\Phi'_q(\zeta_j^{pr})\Phi_r(\zeta_j)}{\Phi_r(\zeta_j^q)\Phi_q(\zeta_j^p)}=\frac{pr\zeta_j^{pr-1}\Phi'_q(\zeta_j^{pr})}{\Phi_{qr}(\zeta_j)\Phi_q(\zeta_j^p)}.
\label{inter}
\end{equation}
Hence, $1/|\Phi_{pqr}'(\zeta_j)|\leq 2q/p$ and
\begin{equation}
|w_{ij}|\leq 2q(m-1)\leq 2(m-1)^2,
\end{equation}
Thus $Cond(V_{\Phi_n})\leq 2 m^4$.
\end{proof}
\begin{thm}For $n=p^lq^sr^t$, it holds that $Cond(V_{\Phi_n})\leq 2\phi(rad(n))^2m^2$.
\label{threeprimes}
\end{thm}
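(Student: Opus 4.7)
The plan is to mirror exactly the passage from Proposition~\ref{prop3f} (the square-free three-prime case) to the general prime-power version, just as Theorem~\ref{twoprimes} was derived from the $n=pq$ analysis. The two ingredients already in place are: Proposition~\ref{polycyclo2}, which gives $\Phi_n(x) = \Phi_{pqr}(x^d)$ with $d := p^{l-1}q^{s-1}r^{t-1} = n/rad(n)$, and the bound $1/|\Phi'_{pqr}(\zeta)| \leq 2q/p$ for every primitive $pqr$-th root $\zeta$, which is implicit inside the proof of Proposition~\ref{prop3f}.

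First I would bound the numerators $|e_{m-i}(\overline{\zeta_j})|$ of the entries $w_{ij}$ of $V_{\Phi_n}^{-1}$ using Proposition~\ref{numerator}: since $A(n) = A(rad(n)) = A(pqr) \leq p-1$ by Bang's Theorem~\ref{thmbang}, one obtains $|e_{m-i}(\overline{\zeta_j})| \leq (i-1)(p-1)+1 \leq m(p-1)$ for all $i$. Next, differentiating the substitution identity $\Phi_n(x) = \Phi_{pqr}(x^d)$ at $\zeta_j$ gives $|\Phi'_n(\zeta_j)| = d\,|\Phi'_{pqr}(\zeta_j^d)|$, and since $\zeta_j^d$ is a primitive $pqr$-th root, the recalled bound produces $1/|\Phi'_n(\zeta_j)| \leq 2q/(pd)$.

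Combining these two estimates yields $|w_{ij}| \leq \frac{2(p-1)q}{p}\cdot\frac{m}{d}$. Since $m/d = \phi(rad(n)) = (p-1)(q-1)(r-1)$, and since the elementary inequality $(p-1)q/p \leq \phi(rad(n))$ holds (it reduces to $q\leq (q-1)(r-1)$, true whenever $r\geq 5$), one obtains $|w_{ij}|\leq 2\phi(rad(n))^2$. Applying equation~\ref{easy} then gives $||V_{\Phi_n}^{-1}|| \leq 2m\,\phi(rad(n))^2$, and multiplying by $||V_{\Phi_n}||=m$ produces the claimed bound Cond$(V_{\Phi_n}) \leq 2\phi(rad(n))^2 m^2$.

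The only point requiring real care is confirming that the bound $1/|\Phi'_{pqr}(\zeta)|\leq 2q/p$ derived inside Proposition~\ref{prop3f} holds uniformly for every primitive $pqr$-th root, not only for those explicitly appearing in the square-free case. A quick inspection of that argument shows it depends only on the factorization $\Phi_{pqr}(x)=\Phi_q(x^{pr})\Phi_r(x)/(\Phi_r(x^q)\Phi_q(x^p))$ together with generic unit-circle estimates for $|\Phi_q|$, $|\Phi_r|$, $|\Phi_{qr}|$ and the exact value $|\Phi'_q(w)|=q/|w-1|$ at a primitive $q$-th root $w$, so the bound is indeed uniform; the remainder is routine bookkeeping.
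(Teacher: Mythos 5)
Your proposal is correct and follows essentially the same route as the paper: reduce to the square-free case via $\Phi_n(x)=\Phi_{pqr}(x^{n/rad(n)})$, bound the numerators by roughly $m(p-1)$ using Bang's bound on $A(pqr)$, reuse the estimate $1/|\Phi'_{pqr}(\zeta)|\leq 2q/p$ from Proposition \ref{prop3f}, and finish with the Frobenius norm. Your explicit check of the elementary inequality $(p-1)q/p\leq\phi(rad(n))$ is just a slightly more spelled-out version of the paper's final chain of inequalities.
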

\begin{proof}As in previous case we have:
$$
|e_i(\overline{\zeta_j})|\leq mp.
$$
As for the denominator, from Prop. \ref{polycyclo2}:
\begin{equation}
|\Phi'_n(\zeta_j)|=p^{l-1}q^{s-1}r^{t-1}|\Phi'_{pqr}(\zeta_j^{p^{l-1}q^{s-1}r^{t-1}})|,
\label{final}
\end{equation}
and since $\zeta_j^{p^{k-1}q^{l-1}r^{t-1}}$ is a primitive $pqr$-root of unity, by Equation (\ref{inter}) we have:
$$|w_{ij}|\leq\frac{2mprq^2}{p^lq^sr^t}\leq\frac{2mq}{p^{l-1}q^{s-1}r^{t-1}}\leq 2\phi(rad(n))^2$$
and the result follows.
\end{proof}

\section{A subexponential asymptotic bound}
In sum and discourangingly to prove that the condition number is polynomially bounded, we currently have to impose that the number of prime factors of the conductor is fixed, to ensure that $A(n)$ is polynomial in $n$ (with $k$ kept fixed). The reason is that, in general, the coefficients of cyclotomic polynomials tend to increase very fast. The following result shows in particular that $A(n)$ is a superpolynomial function:

\begin{thm}[Maier \cite{maier}] For any $N>0$, there are $c(N)>0$ and $x_0(N)\geq 1$ such that for all $x\geq x_0(N)$, we have a lower bound
$$
|\{n\leq x: A(n)\geq n^N\}|\geq c(N)x.
$$
\end{thm}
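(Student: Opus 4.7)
The plan is to produce, for each fixed $N>0$, a positive-density family of squarefree integers $n\leq x$ with $A(n)\geq n^N$. Since squarefree integers have density $6/\pi^2$ in $\mathbb{N}$ and $A(n)=A(rad(n))$ by Proposition \ref{polycyclo2}, any positive-density statement for squarefree $n$ transfers to all $n\leq x$. I would start from the M\"obius factorisation
$$
\Phi_n(z)=\prod_{d\mid n}(z^d-1)^{\mu(n/d)}\quad(n\text{ squarefree}),
$$
together with the trivial bound $A(n)\geq |\Phi_n(z)|/(m+1)$ for $|z|=1$. Setting $z=e^{2\pi i\alpha}$ then reduces the task to exhibiting, for a positive-density set of $n$, a value of $\alpha$ at which
$$
S_n(\alpha):=\sum_{d\mid n}\mu(n/d)\log|2\sin(\pi d\alpha)|\geq N\log n+\log(m+1).
$$

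The next step is to choose $\alpha=a/q$ for a small auxiliary prime $q$ coprime to $n$. The terms of $S_n(\alpha)$ depend only on the residues of the divisors $d\mid n$ modulo $q$, i.e. on the distribution of the prime factors of $n$ modulo $q$. A careful combinatorial analysis---grouping divisors of $n$ by the residues of their prime factors and treating $\log|2\sin(\pi a/q)|$ as fixed data depending only on $q$ and $a$---shows that when $n$ has sufficiently many prime factors concentrated in a prescribed residue class modulo $q$, the sum $S_n(\alpha)$ grows at least as a positive power of $\omega(n)$; say $S_n(\alpha)\geq c_q\,\omega(n)^{\beta}$ for some $\beta>0$ independent of $n$. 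Forcing $\omega(n)\geq C(\log\log n)^{1/\beta}$ with $C=C(N,q)$ large enough then yields $A(n)\geq n^N$.

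The final ingredient is a density count: the set of squarefree $n\leq x$ with at least $C\log\log n$ prime factors in a fixed arithmetic progression $a\pmod q$ has positive lower density, by the Hardy--Ramanujan normal-order theorem applied to the restricted prime-counting function $\omega(n;q,a)$ combined with a routine sieve. The main obstacle is the middle step: $S_n(\alpha)$ contains potentially unbounded negative contributions from divisors $d$ with $d\alpha$ close to an integer, so producing an unconditional \emph{lower} bound requires averaging---over $\alpha$ in a short interval, or equivalently over shifts in the prime factorisation of $n$---to absorb these singularities. This averaging procedure, which is the technical heart of Maier's approach, is where all the real analytic work lies.
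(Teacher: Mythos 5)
This statement is Maier's theorem, which the paper does not prove: it is quoted verbatim as an external result (reference \cite{maier}) to motivate the fact that $A(n)$ is superpolynomial, and no argument is given. There is therefore no ``paper's own proof'' to compare against.

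Evaluating the sketch on its own terms: the first step, reducing to squarefree $n$ via $A(n)=A(\mathrm{rad}(n))$, is a useful framing but as written the reduction is not quite what you want---$A(\mathrm{rad}(n))\geq \mathrm{rad}(n)^N$ does not give $A(n)\geq n^N$ when $\mathrm{rad}(n)\ll n$, so the right move is to work entirely inside the squarefree integers (which already have positive density) rather than to ``transfer.'' More seriously, the density count at the end needs more care: by the Hardy--Ramanujan and Erd\H{o}s--Kac theorems the normal order of the restricted counting function $\omega(n;q,a)$ is $\tfrac{1}{\phi(q)}\log\log n$ with fluctuations only of size $\sqrt{\log\log n}$, so the claim that the set of $n\leq x$ with ``at least $C\log\log n$ prime factors in a fixed progression'' has positive density for ``large enough $C$'' is false as stated; the inequality $\omega(n;q,a)\geq c\log\log n$ has positive density only for $c<1/\phi(q)$, and whether this is compatible with the growth you need depends entirely on the unproven inequality $S_n(\alpha)\geq c_q\,\omega(n)^\beta$ and the value of $\beta$. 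Finally, and most importantly, you yourself flag that the bound on $S_n(\alpha)$---absorbing the divisor terms with $d\alpha$ near an integer---is ``where all the real analytic work lies'' and you do not supply it. That step is precisely the content of Maier's theorem; a proposal that defers it is an orientation toward the literature, not a proof. So the sketch points in a broadly sensible direction (M\"obius product, evaluation on the unit circle, control via primes in residue classes), but both the central analytic estimate and the density count it hinges on are left unestablished.
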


But even worse, Erd\"os showed in \cite{erdos} that there exist infinitely many $n$ such that $A(n)\geq e^{e^{c \log(n)/\log \log(n)}}$ where $c$ can be taken to be $\log(2)$ (\cite{vaughan}).

These results highlight how difficult is to control the condition number of cyclotomic fields (not to mention the case of arbitrary Galois number fields), at least with the current available techniques. For the moment, we content ourselves with a suboptimal result to  close our study: namely, that, in general, condition numbers of cyclotomic polynomials grow subexponentially in the degree. We need the following result:

\begin{thm}[Bateman \cite{bateman}] For any $\varepsilon> 0$, there exists $n_0 \geq 2$ such that for $n\geq n_0$ we have 
$$
A(n)\leq e^{n^{(1+\varepsilon)\log(2)/\log \log(n)}}.
$$
\label{batemanth}
\end{thm}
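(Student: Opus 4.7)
The plan is a two-step reduction: first, an analytic bound showing that $\log A(n)$ is essentially controlled by the divisor function $d(n)$; second, an application of Wigert's classical result on the maximal order of $d(n)$.

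For the first step, I would start from the M\"obius expansion
$$\Phi_n(z) = \prod_{d\mid n}(1-z^d)^{\mu(n/d)}, \qquad n>1,$$
and apply Cauchy's coefficient formula along a circle $|z|=r<1$ to obtain $A(n)\leq r^{-m}\max_{|z|=r}|\Phi_n(z)|$, where $m=\phi(n)$. Taking logarithms, one bounds
$$\log\max_{\theta}|\Phi_n(re^{i\theta})| \;\leq\; \sum_{\substack{d\mid n\\ \mu(n/d)\neq 0}}\bigl|\log|1-r^d e^{id\theta}|\bigr|.$$
Only squarefree values of $n/d$ survive the M\"obius factor, so this divisor sum has at most $2^{\omega(n)}$ terms, each of analytic size $O(\log(1/(1-r)))$. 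Choosing $r=1-c/n$ for a suitable $c>0$ and handling the $r^{-m}$ factor via $\log(1/r)\sim c/n$, one arrives at an estimate of the form $\log A(n)\leq C\,d(n)\log n$ for some absolute constant $C$, where $d(n)$ denotes the number of divisors of $n$.

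For the second step, Wigert's theorem yields $\limsup_{n} \log d(n)\cdot \log\log n/\log n=\log 2$, so for any $\varepsilon'>0$ and all $n\geq n_0(\varepsilon')$ one has $d(n)\leq n^{(1+\varepsilon')\log 2/\log\log n}$. Plugging this into the estimate of the first step,
$$\log A(n)\;\leq\; C\, n^{(1+\varepsilon')\log 2/\log\log n}\log n,$$
and since the right-hand exponent dominates any polylogarithmic factor, choosing $\varepsilon'$ slightly smaller than $\varepsilon$ absorbs the $C\log n$ term for all $n$ large enough, recovering the stated bound.

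The main obstacle is the first step: a naive choice of $r$ produces either a trivial estimate or one that is exponential in $n$, so the radius must be tuned as a function of $n$ to balance the analytic factor $\log(1/(1-r))$ against the arithmetic factor $2^{\omega(n)}\leq d(n)$. Carrying out this balance rigorously requires splitting the divisor sum into ``small'' divisors (for which $|1-r^d|$ behaves like $1-r$) and ``large'' ones (for which $|1-r^d|$ is close to $1$), and showing that the main contribution lines up with the $2^{\omega(n)}\log n$ heuristic; this is the technical heart of Bateman's original analysis and the only place where the specific constant $\log 2$ in the exponent enters the argument.
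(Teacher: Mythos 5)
The paper states Theorem~\ref{batemanth} as a cited input from Bateman's 1949 note \cite{bateman} and does not reprove it, so there is no proof in the text to compare yours against; I will assess the sketch on its own terms. Your two-step structure --- reduce $\log A(n)$ to a divisor-type quantity, then invoke its maximal order --- is exactly the shape of Bateman's argument, and your analytic implementation of the first step is sound. It is in fact cleaner than you make it sound: since $1-r^d\ge 1-r$ for every $d\ge 1$ and $r\in(0,1)$, while $|1-r^de^{id\theta}|\le 2$, one has the \emph{uniform} bound
$$\bigl|\log|1-r^de^{id\theta}|\bigr|\;\le\;\log\frac{1}{1-r}\qquad\text{for all }d\ge1,\ \theta\in\mathbb{R},\ r\in[\tfrac12,1),$$
so no splitting into small and large divisors is required. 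With $r=1-c/n$ you then get $m\log(1/r)=O(1)$ and $\log\max_{|z|=r}|\Phi_n(z)|\le 2^{\omega(n)}\log(n/c)$, hence $\log A(n)\le C\,2^{\omega(n)}\log n$. Note that what your Cauchy bound naturally produces is $2^{\omega(n)}$, not $d(n)$; since $2^{\omega(n)}\le d(n)$ you may indeed pass to Wigert, or one can use directly the maximal-order estimate $\omega(n)\le(1+o(1))\log n/\log\log n$ (the primorial argument the paper records in Eq.~(\ref{omega})), giving $2^{\omega(n)}\le n^{(1+\varepsilon')\log 2/\log\log n}$ for large $n$. The absorption of the residual $C\log n$ into a slightly larger $\varepsilon$ is fine because $\log\log n = o(\log n/\log\log n)$.

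For comparison, Bateman's own proof of the first step is elementary and avoids contour integration: from $\Phi_n(z)=\prod_{d\mid n}(1-z^d)^{\mu(n/d)}$ one expands the $\mu(n/d)=-1$ factors as geometric series with nonnegative coefficients and bounds the coefficients of $\Phi_n$ by a direct count, again landing on $\log A(n)=O\bigl(2^{\omega(n)}\log n\bigr)$. The two routes converge at the same arithmetic input on $\omega(n)$ (equivalently $d(n)$), and neither is materially sharper; the elementary one simply dispenses with tuning the radius $r$.
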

To give our final result we need the following:
\begin{defn}[The prime-divisor function] For $n\in\mathbb{N}$, $\omega(n):=$number of different primes dividing $n$.
\end{defn}
It is well known that $\omega(n)=O(\log(n)/\log \log(n))$. An argument for this is as follows: for any $x\in\mathbb{R}$ denote by $x\sharp$ its primorial, namely, the product of all the primes below $x$. Then 
\begin{equation}
\omega(n)\leq \omega (\log(n)\sharp)=\pi(\log(n))\approx\frac{\log(n)}{\log \log(n)},
\label{omega}
\end{equation}
where $\pi(n)$ stands for the number of primes below $n$ and the asymptotic last approximation is due to the Prime Number Theorem.
We can now close with the following result:
\begin{thm}For $n\geq 2$, we have
$$
Cond(V_{\Phi_n})=\mathcal{O}\left( n^{n^{\frac{1}{\log\log(n)}}+\frac{\log(n)}{\log \log(n)}+3}e^{n^{\frac{1}{\log \log(n)}}}\right).
$$

\label{final}
\end{thm}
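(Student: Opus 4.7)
The plan is to specialise Theorem \ref{main2} with $k := \omega(n)$ and then feed in the two asymptotic ingredients just recalled: Bateman's upper bound for $A(n)$ (Theorem \ref{batemanth}) and the estimate $\omega(n)\leq \pi(\log n)$ from equation (\ref{omega}). Plugging $k=\omega(n)$ into Theorem \ref{main2} gives
$$
Cond(V_{\Phi_n}) \leq 2\,rad(n)\, n^{2^{\omega(n)} + \omega(n) + 2}\, A(n),
$$
and the remaining work is simply to bound each of the four factors on the right-hand side individually by something that fits the target expression.

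For the first factor I would use the crude inequality $rad(n)\leq n$, which bumps the exponent of $n$ from $+2$ to $+3$. For the middle factors, I would substitute $\omega(n)\leq \log(n)/\log\log(n)$ directly---this produces the summand $\log(n)/\log\log(n)$ in the target exponent---and observe that
$$
2^{\omega(n)} \leq 2^{\log(n)/\log\log(n)} = e^{\log(2)\,\log(n)/\log\log(n)} \leq e^{\log(n)/\log\log(n)} = n^{1/\log\log(n)},
$$
where the last step uses $\log 2 < 1$. This supplies the remaining summand $n^{1/\log\log(n)}$ in the exponent of $n$.

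For the last factor $A(n)$ I would invoke Theorem \ref{batemanth} with a fixed $\varepsilon>0$ small enough that $(1+\varepsilon)\log(2)\leq 1$; for all sufficiently large $n$ this yields $A(n)\leq \exp(n^{(1+\varepsilon)\log(2)/\log\log(n)})\leq \exp(n^{1/\log\log(n)})$, which is precisely the exponential factor appearing in the target bound. Multiplying the four estimates together and absorbing the constant $2$ into the implied $\mathcal{O}$-constant then gives the claimed asymptotic.

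The proof is therefore a short chain of substitutions, and the main obstacle is just book-keeping: one must verify that the three asymptotic inputs ($\omega(n)\leq \log(n)/\log\log(n)$ via the Prime Number Theorem, Bateman's superpolynomial bound, and the trivial consequences of $\log 2<1$) are all simultaneously valid from some common threshold $n_0$ onwards. This is why the conclusion must be phrased in the weaker $\mathcal{O}$-form rather than as an explicit inequality for every $n\geq 2$. No genuinely new analytic input beyond the results already cited in this section is required.
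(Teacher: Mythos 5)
Your proposal follows the paper's proof step for step: you plug $k=\omega(n)$ into Theorem~\ref{main2}, trivially bound $rad(n)\leq n$, invoke Bateman's Theorem~\ref{batemanth} with a fixed $\varepsilon$ small enough that $(1+\varepsilon)\log 2\leq 1$, and close with the Prime Number Theorem estimate $\omega(n)=O(\log n/\log\log n)$ from~(\ref{omega}) together with $\log 2<1$. The paper is terser (it rolls the last two observations into the single phrase ``since $k=\omega(n)$, from Eq.~\ref{omega} the result follows''), but your account is simply a more explicit write-up of exactly the same argument.
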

\begin{proof}First, from Theorem \ref{main2}, we have
$$
Cond(V_{\Phi_n})=O(rad(n)n^{2^k+k+2})A(n).
$$
Secondly, from Theorem \ref{batemanth} choosing $\varepsilon$ small enough, we can write
$$
Cond(V_{\Phi_n})=O(n^{2^k+k+3}e^{n^{\log(2)/\log \log(n)}})
$$
Finally, since $k=\omega(n)$, from Eq. \ref{omega} the result follows.
\end{proof}

\end{document}